\theoremstyle{plain}
\newtheorem{theo}{Theorem}[section]
\newtheorem{lem}[theo]{Lemma}
\newtheorem{propo}[theo]{Proposition}
\theoremstyle{definition}
\newtheorem{remark}[theo]{Remark}
\def\finremark{	\hfill $\boxtimes$}
\numberwithin{equation}{section}
\newtheorem{propoletra}{Proposition}
\newtheorem{thmletra}{Theorem}
\def\D{\mathbb{D}}
\def\E{\mathbf{E}}
\def\V{\mathbf{V}}
\def\P{\mathbf{P}}
\def\K{\mathcal{K}}
\title{Galton-Watson processes, simple varieties of trees and Khinchin families}
\author[V.\,J. Maci\'a]{V\'{\i}ctor J. Maci\'a}
\address[V\'{\i}ctor J. Maci\'a]{Departamento de Matem\'aticas, CUNEF Universidad, Madrid, Spain.}
\email{victor.macia@cunef.edu}
\subjclass{30B10, 60J80, 05C05}
\keywords{Power series with non-negative coefficients, power series distributions, Khinchin families, Lagrange inversion, Galton-Watson processes, random trees, simple varieties of trees, extinction probability.}
\begin{document}

	\begin{abstract}

		In this note, we introduce a unified analytic framework that connects Bienaymé–Galton–Watson processes, simple varieties of trees and Khinchin families. Using Lagrange’s inversion formula, we derive new coefficient-based expressions for extinction probabilities and reinterpret them as boundary phenomena tied to the domain of the inverse of the solution to Lagrange’s equation. This perspective reveals an additional link between combinatorial and probabilistic models, simplifying classical arguments and yielding new results. It also leads to a computationally efficient method for simulating probabilities associated to Galton–Watson processes via power series coefficients. 
	\end{abstract}
	\maketitle
	
	\vspace{-.65 cm}
	\section{Introduction}

	\begingroup
\enlargethispage{-2.5\baselineskip}
		Power series distributions play a significant role in the study of rooted trees (see, for instance, \cite{Drmota_paper,Drmota, Flajolet, Janson2012,Pitman,Pitman2}). Although their primary role has traditionally been that of a tool rather than an object of intrinsic interest, they provide a natural bridge between combinatorial and probabilistic perspectives, offering valuable insights into the theory of random trees. In this note, using Khinchin families and Lagrange's inversion, we relate the probabilistic behaviour of Galton–Watson processes to the analytic behaviour of the generating function of their combinatorial counterpart: the simple varieties of trees. %To set this in context, we now recall the century-long origins of Galton–Watson extinction.
		\medskip
		
			A Bienaymé-Galton–Watson (henceforth Galton–Watson) process is a branching model where every individual independently produces a random number of children following a given offspring distribution. These processes can be traced back to Bienaymé’s work on extinction probabilities in the 1840s and were further developed by F.~Galton and H.~W.~Watson in 1874--1875, see \cite{Bienayme1845, Galton1873, GaltonWatson1875} and \cite{Watson1873}. For a thorough modern treatment of branching processes, see \cite{AthreyaNey}; for further historical background, see \cite{ Guttorp1992,Harris1963}.
			\medskip
			
		 Their motivating question can be summarized as: \textit{Will a given family line eventually die out?}. It is a classical result that the extinction probability $q$ of a Galton-Watson process is the smallest non-negative fixed point of the offspring’s probability‐generating function $G_Y(z)$, namely
		\[
		G_Y(q)
		= \sum_{n=0}^\infty \P(Y=n)\,q^n
		= q.
		\]
		
		The first complete proof of this result was given by J.~F.~Steffensen in his 1930 note \cite{Steffensen1930} (in Danish) and its 1933 expansion \cite{Steffensen1933}, see also the unpublished proof by C.\,M. Christensen in \cite{Guttorp1992}. For additional historical details and an English translation of Steffensen’s 1930 note, see Guttorp’s report \cite{Guttorp1992}.
		  \clearpage
	\endgroup
		
	\newpage
To each power series  $f \in \K$ (see the definition below) with radius of convergence $R>0$ we can associate a family of random variables $(X_t)_{t \in [0,R)}$, the Khinchin family associated to $f$. For each $t \in (0,R)$ the random variable $X_t$ has mass function
	\begin{align*}
	\P(X_t = n) = \frac{a_n t^n}{f(t)}\,, \quad \text{ for any } n \geq 0\,,
	\end{align*}
	with $X_0 \equiv 0$. The theory of Khinchin families examines the behaviour of this family of random variables, and its normalized version, as the parameter $t \uparrow R$. See, for instance, \cite{CFFM1, CFFM2, CFFM3, K_dos}, and more recently \cite{MaciaThesis, MaciaGaussian}.
	\smallskip

%	The theory of branching processes originated with Bienaymé’s work on extinction probabilities in the 1840s, see \cite{Bienayme1845}, and was further developed by F. Galton and H.~W.~Watson in 1874--1875, see \cite{GaltonWatson1875}. Their motivating question \textit{Will a given family line eventually die out?} leads to the classical characterization of the extinction probability~\(q\) as the smallest nonnegative fixed point of the probability generating function of the offspring distribution $Y$, that is, the probabillity of extinction $q$ is the smallest solution to:
%	\[
%	G_Y(q) = \sum_{n =0}^{\infty}\P(Y = n)q^n = q.
%	\]
%	\smallskip

One of the main results of this note is the following new coefficient-based expression for
the probability of extinction of a Galton–Watson process: let $\psi \in \K$ be a power series with radius of convergence $R_{\psi}>0$ and Khinchin family $(Y_t)$ and let $Y_t$ be the offspring distribution of our process, then the probability of extinction $q(t)$ is given by:
	\begin{align}\label{eq: extinction_intro}
	q(t) = \sum_{n = 1}^{\infty}\frac{A_n t^{n-1}}{\psi(t)^n} = \frac{g(t/\psi(t))}{t}\,, \quad \text{ for any } t \in (0,R_{\psi})\,,
	\end{align}
 here $A_n$ is the $n$-th coefficient of the solution of Lagrange's equation $g(z) = z\psi(g(z))$ with data $\psi \in \K$. These coefficients can be computed in terms of $\psi$ by using Lagrange's inversion formula. We refer the reader to Theorem~\ref{thm:extinction_tree} below for a detailed statement and proof of this result.
\smallskip

A simple variety of trees is a family of weighted rooted plane trees in which each node receives a weight that depends on the number of its children (outdegree). The weight of the whole tree is the product of the different node-weights, see \cite[p. 452]{Flajolet}. Throughout this note we choose those node-weights to be the coefficients of a power series $\psi\in\mathcal{K}$ (see eq.~\eqref{eq:weights} below). For such a fixed $\psi$, the sequence $A_n$ records the sum of the weights over all rooted plane trees with $n$ nodes (see Proposition \ref{propo: weights} below). Specific choices of $\psi$ yield concrete families of combinatorial trees; for instance, $\psi(z)=e^{z}$ corresponds to rooted labelled Cayley trees, while $\psi(z)=1/(1-z)$ corresponds to rooted plane trees.
\smallskip 

	It turns out that the radius of convergence of the solution to Lagrange’s equation with data \(\psi\) is intimately related to the extinction probability of the corresponding Galton–Watson process with offspring distribution \(Y_t\), marking the critical parameter at which the process shifts (phase transition) from almost sure extinction to positive survival probability. 
\medskip

	For instance if we fix $\psi(z) = e^z$ and denote $(Y_t)$ the Khinchin family associated to $\psi$, then, for each $t\geq 0$, the random variable $Y_t$ is a Poisson random variable of parameter $t \geq 0$, and therefore the probability of extinction of the Galton-Watson process with offspring distribution $Y_t$ is given by the closed formula
	\begin{align*}
	q(t) = \sum_{n = 1}^{\infty}\frac{n^{n-1}t^{n-1}}{n!e^{nt}} = \frac{g(t/\psi(t))}{t}\,, \quad \text{ for any } t > 0\,.
	\end{align*}
Here, we simultaneously consider all possible Galton–Watson processes with offspring distribution given by a Poisson random variable with parameter \( t > 0 \). Notice that $q(t) \equiv 1$, for any $t \leq \tau = 1$ (to wit, the series above is actually equal to 1 for $t \leq 1$) and also that the radius of convergence of $g(z)$ is given by $\rho = \tau/\psi(\tau) = 1/e$, see Theorem \ref{thm: radius_g} below. When we cross the radius of convergence of $g(z)$, that is, when $t>\tau = 1$, the inversion relation \( {t}/{\psi(t)} = g^{-1}(t) \) no longer holds, and consequently the extinction probability should satisfy the inequality \( q(t) < 1 \). Observe that for $t> \tau = 1$, the function $t/\psi(t) = te^{-t}$ is strictly decreasing, see Lemma \ref{lemma: monotony_t/psi(t)} below.
	\smallskip
	
	After deriving formula \eqref{eq: extinction_intro}, we analyze the probability of extinction function in detail, establishing its continuity, differentiability (except at one point) and characterizing its asymptotic behavior as  the parameter \(t\uparrow R_{\psi}\). An explicit upper bound on the extinction probability function is also derived. Moreover, we establish several precise relationships between Khinchin families and random variables arising in Galton–Watson processes. We identify some of these random variables as appropriately reparameterized Khinchin families. Beyond its theoretical interest, this framework offers an efficient method for simulating probabilities associated to Galton-Watson processes by means of power series and the numerical computation of its coefficients. 
	We believe that this perspective will be valuable to both combinatorialists and probabilists, as it provides an explicit bridge between these viewpoints.

\medskip
	
This note aims to be self-contained and accessible to researchers with backgrounds in probability, combinatorics, or complex analysis. Section~2 introduces the fundamental concepts of Khinchin families. Section~3 reviews rooted plane trees and Galton–Watson processes. Section~4 recalls Lagrange’s inversion formula and gives a closed form expression for the radius of convergence of the power series that solves Lagrange's equation. Section~5 provides a combinatorial interpretation of the solution to Lagrange’s equation in terms of weighted rooted plane trees. Finally, Section~6 constitutes the core of this note: it develops the parametric Galton–Watson process, derives explicit coefficient-based formulas for various probabilities, including extinction probabilities, and explores their analytic and probabilistic behaviour.

	\subsection{Notations}
	
	For random variables \( X \) and \( Y \), the notation \( X \overset{d}{=} Y \) indicates that \( X \) and \( Y \) are equal in distribution, that is,
	\[
	\P(X \in B) =\P(Y \in B) \quad \text{for all Borel sets } B \subset \mathbb{R}.
	\]
	
	The unit disk in the complex plane \( \mathbb{C} \) is denoted by \( \mathbb{D} \). The open disk of center \( a \in \mathbb{C} \) and radius \( R > 0 \) is denoted \( \mathbb{D}(a, R) \), and its closure is denoted by \( \overline{\mathbb{D}(a, R)} \).
	\smallskip

	For any power series \( g(z) = \sum_{n=0}^\infty a_n z^n \in \mathcal{K} \) with radius of convergence \( R > 0 \), we define
	\[
	Q_g \triangleq \gcd\{n \geq 1 : a_n \neq 0\} = \lim_{N \to \infty} \gcd\{1 \leq n \leq N : a_n \neq 0\}.
	\]
	%	If \( Q_g > 1 \), then \( g(z) \) can be written as \( g(z) = f(z^{Q_g}) \) for a certain companion power series \( f \in \mathcal{K} \), which has radius of convergence \( R^{Q_g} \). Observe that \( Q_f = 1 \).
	
	\subsection*{Acknowledgements}
The author thanks Professor José L. Fernández for many valuable conversations, and also Odi Soler i Gibert for comments on an early draft of this note.
	
	\section{Khinchin families}
	
	In this section, we introduce the key elements of the theory of Khinchin families that are necessary to develop this note. A more comprehensive treatment can be found in \cite{CFFM1}, \cite{CFFM2}, \cite{CFFM3} and \cite{K_dos} and more recently, in \cite{MaciaThesis} and \cite{MaciaGaussian}.
	\medskip
	
	We denote by $\K$ the class of non-constant power series
	$$f(z)=\sum_{n=0}^\infty a_n z^n$$  with positive radius of convergence $R>0$,  which have non-negative Taylor coefficients and such that  $a_0>0$. Since $f \in \K$ is non-constant, at least one coefficient other than $a_0$ is positive.
	\medskip
	
	The \textit{Khinchin  family} of  such a  power series $f \in \K$ with radius of convergence $R>0$ is the family of random variables $(X_t)_{t \in [0,R)}$ with values in $\{0, 1, \ldots\}$ and with mass functions given by
	$$
	\P(X_t=n)=\frac{a_n t^n}{f(t)}\, , \quad \mbox{for each $n \ge 0$ and $t \in (0,R)$}\, .$$  Notice that $f(t)>0$ for each $t \in[0,R)$. We define $X_0\equiv 0$. 
	Formally, the definition of $X_0$ is consistent with the general expression for $t \in (0,R)$, with the convention that $0^0 = 1$, meaning that $\P(X_0 = 0) = 1$.
	\subsection{Mean and variance functions.} \label{section: mean and variance}
	
	For the mean and variance of $X_t$, we reserve the notation
	$m_f(t)=\E(X_t)$ and $\sigma_f^2(t)=\V(X_t)$, for $t \in [0,R)$. In terms of the power series $f \in \K$, the  mean and the variance of $X_t$ may be  written as
	\begin{align}\label{eq:m and sigma in terms of f}
		m_f(t)=\frac{t f^\prime(t)}{f(t)} = t\frac{d}{dt}\ln(f(t)), \qquad \sigma_f^2(t)=t m_f^\prime(t)\, , \quad \mbox{for $t \in [0,R)$}\,.
	\end{align}
	Because the variance is always positive, for any $t \in (0,R)$, we have that $m_{f}$ is an increasing function. For further results about the range of the mean, see \cite{CFFM1}.
	\smallskip

	\subsection{The classes \(\mathcal K_s\) and \(\mathcal K^\star\)}
	
	A power series \(f(z)=\sum_{n\ge0}a_nz^n\in\mathcal K\) lies in \(\mathcal K^\star\) if 
	\[
	M_f:=\lim_{t\uparrow R}m_f(t)>1,
	\]
	so there is a unique \(\tau\in(0,R)\) (the \emph{apex}) with \(m_f(\tau)=1\), equivalently \(\tau f'(\tau)=f(\tau)\). 
	The existence of the apex prevents \( f \) from being a polynomial of degree 1. In particular, it implies that the second derivative at \( \tau \) satisfies \( f^{\prime\prime}(\tau) > 0 \). 
	\smallskip
	
	If \(g(z)=z^Nf(z)=\sum_{n\ge N}b_nz^n\), its associated Khinchin family \((Y_t)\) satisfies
	\[
	\P(Y_t=n)=\frac{b_nt^n}{g(t)},\quad \text{ for all } n\ge N,
	\]
	and \(Y_t\overset d= X_t+N\), where \(X_t\) is the Khinchin family of \(f\). The class \(\mathcal K_s\) comprises all such shifted power series (with associated shifted Khinchin family). See \cite{K_dos}, for further details.

	\section{Rooted trees}
	
	In this section, we introduce basic definitions associated with rooted trees. Additionally, we provide an overview of the basic definitions related to the Galton-Watson processes.
	
	\subsection{The combinatorial class of rooted plane trees}
	
	Rooted trees are graphs without cycles, with a distinguished node called the root. A \textit{rooted plane tree} is a rooted tree in which the children of each node are assigned a left-to-right order. Equivalently, it is a rooted tree embedded in the plane, where the relative order of siblings is preserved.  
	\smallskip
	
	Rooted plane trees can be defined recursively as follows:  
	\begin{itemize}
		\item A single node is a rooted plane tree.
		\item If \( T_1, T_2, \dots, T_k \) are rooted plane trees, then a new rooted plane tree can be formed by adding a root node and attaching \( T_1, T_2, \dots, T_k \) as its ordered children (from left to right).
	\end{itemize}
	
	We denote by \( (\mathcal{G}, |\cdot|) \) the combinatorial class of rooted plane trees, where the size function \( |\cdot| \) gives the number of nodes in a given tree. This family of rooted trees is codified, in the language of the combinatorial classes, by means of the specification
	\begin{align*}
		\mathcal{G} = \mathcal{Z} \times \text{SEQ}(\mathcal{G})\,.
	\end{align*}
	This structure assumes that the descendants of the root are ordered as a list of rooted trees. See \cite{Flajolet} [p. 65] for further details.
	
	\subsection{The Polish space of rooted plane trees} We denote by \(\mathbb{T}\) the set of rooted plane trees in which every node has a finite number of descendants (although the trees themselves may be infinite), see, for instance, \cite{Kesten} for further details. It is well known that $\mathbb{T}$, endowed with the metric
$$
	\delta(t,t') = 2^{-\sup\{h \in \mathbb{N} \cup \{0\}\, : \, r_h(t) = r_h(t')\}},
$$
	is a Polish space (see, for instance, \cite{Kesten}). Here, \(r_h(t)\) denotes the restriction function that gives the tree obtained by cutting \(t\) at level \(h\) (i.e., keeping the nodes at level \(h\) as leaves).
	\smallskip
	
	We can express a partition of this space as \( \mathbb{T} = \mathcal{G} \cup \mathcal{G}_{\infty} \). Here, \( \mathcal{G}_{\infty} = \mathbb{T} \setminus \mathcal{G} \) denotes the set of infinite rooted plane trees, where each node has a finite number of descendants.

	\subsection{Galton–Watson processes}
	
	A Galton–Watson process begins with a single ancestor whose offspring count \(Y\) follows the distribution \((p_k)_{k\ge0}\); each descendant independently reproduces according to the same law in successive generations, see, for instance,  \cite{AthreyaNey} and \cite{Neveu}. If \(Z_n\) denotes the population at generation \(n\), then
	\[
	Z_{n+1}=\sum_{j=1}^{Z_n}Y_j,
	\]
	with \(Y_j\) i.i.d.\ copies of \(Y\) and $Z_0 = 1$. The process $(Z_n)_{n \geq 0}$ is a Galton-Watson process. The mean of the offspring distribution \(m=\E[Y]\) determines extinction: if \(m\le1\), extinction occurs almost surely (\(q=1\)); if \(m>1\), the extinction probability is less than one, i.e. \(q<1\). The probability $q$ is the minimal solution of the equation
	\[
	q=\sum_{k\ge0}p_kq^k.
	\]
	Every realization of the process corresponds to a rooted plane tree that encodes its parent–child relations. For further details about these processes see, for instance, \cite{AthreyaNey} and \cite{Neveu}.

	\section{Lagrange's inversion formula}
	
	Let $\psi$ be a power series in $\K$ with radius of convergence $R_\psi > 0$. Let $g$ be the  power series which is the (unique) solution of Lagrange's equation with data $\psi$, i.e. satisfying:
	$$(\star) \quad \quad g(z)=z \psi(g(z)).$$
	
	The following classical result describes an explicit relationship between the coefficients of the power series \( \psi \) and the solution \( g \), thereby providing an explicit power series representation of \( g \) as a solution to Lagrange's equation \((\star)\).
	
	\begin{thmletra}[Lagrange's inversion formula]
		Let \( \psi \in \mathcal{K} \) be a power series with radius convergence $R_{\psi}>0$, and consider the function \( g(z) \) defined as the solution to Lagrange's equation with data \( \psi \):
		\[
		g(z) = z\psi(g(z)).
		\]
		Then, the coefficients of \( g(z) \) satisfy the relation:
		\[
		A_n = \textsc{coeff}_n[g(z)] = \frac{1}{n} \textsc{coeff}_{n-1}[\psi(z)^n], \quad \text{for any } n \geq 1,
		\]
		with \( A_0 = 0 \).
		\smallskip
		
		More generally, for any holomorphic power series \( H \) such that \( H(g(z)) \) is well-defined, we have:
		\[
		\textsc{coeff}_n[H(g(z))] = \frac{1}{n} \textsc{coeff}_{n-1}[H^{\prime}(z) \psi(z)^n], \quad \text{for any } n \geq 1.
		\]
	\end{thmletra}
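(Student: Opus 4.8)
The plan is to prove the general formula by a complex-analytic (residue) argument and then recover the coefficient relation for $g$ itself as the special case $H(z)=z$. The starting point is to express the target coefficient via Cauchy's integral formula: for a small positively oriented circle $\gamma$ about the origin contained in the disk of analyticity,
\[
\textsc{coeff}_n[H(g(z))] = \frac{1}{2\pi i}\oint_\gamma \frac{H(g(z))}{z^{n+1}}\,dz.
\]
Before this is meaningful I must establish that $g$ is holomorphic near $0$ with $g(0)=0$; this follows from the analytic implicit function theorem applied to $F(z,w)=w-z\psi(w)$ at $(0,0)$, since $\partial_w F(0,0)=1\neq 0$, giving a unique analytic germ $w=g(z)$. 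Crucially, because $\psi(0)=a_0>0$ one has $g'(0)=\psi(0)=a_0\neq 0$, so $g$ is a local biholomorphism whose local inverse is given explicitly by $z=w/\psi(w)$ (this is just Lagrange's equation read with $w=g(z)$).

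The heart of the argument is the change of variables $w=g(z)$, equivalently $z=w/\psi(w)$, in the contour integral. Writing $z=z(w)$ for this inverse, the integral becomes
\[
\frac{1}{2\pi i}\oint_{g(\gamma)} H(w)\,\frac{z'(w)}{z(w)^{n+1}}\,dw.
\]
The key algebraic observation is that $z'(w)/z(w)^{n+1} = -\tfrac{1}{n}\,\tfrac{d}{dw}\bigl(z(w)^{-n}\bigr)$, so integration by parts over the closed contour (the boundary term vanishes) transfers the derivative onto $H$, yielding
\[
\frac{1}{n}\cdot\frac{1}{2\pi i}\oint_{g(\gamma)} \frac{H'(w)}{z(w)^{n}}\,dw = \frac{1}{n}\cdot\frac{1}{2\pi i}\oint_{g(\gamma)} \frac{H'(w)\,\psi(w)^n}{w^{n}}\,dw,
\]
which is precisely $\tfrac{1}{n}\,\textsc{coeff}_{n-1}[H'(w)\psi(w)^n]$, using $z(w)^{-n}=\psi(w)^n/w^n$. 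Taking $H(z)=z$ then gives $A_n=\textsc{coeff}_n[g(z)]=\tfrac{1}{n}\textsc{coeff}_{n-1}[\psi(z)^n]$, while the normalization $A_0=0$ is immediate from $g(0)=0$.

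The main obstacle I expect is not the formal manipulation but the careful justification of the change of variables and of the integration by parts in the correct local setting: I need $\gamma$ small enough that $g$ maps it biholomorphically onto a closed contour $g(\gamma)$ encircling the origin, that $\psi(w)\neq 0$ on and inside $g(\gamma)$ (so the powers $z(w)^{-n}$ and $\psi(w)^{-k}$ are holomorphic away from $w=0$), and that $H$ and $H'$ are holomorphic on the relevant region so the boundary term in the integration by parts genuinely vanishes. Since $\psi(0)=a_0>0$ and $\psi$ is continuous, shrinking $\gamma$ guarantees $\psi\neq 0$ nearby, and all the required holomorphy is then available; once the contour is fixed, every remaining step is a residue-calculus identity. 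An alternative, fully formal route would treat all the identities as equalities of formal power series and verify the inversion purely algebraically, avoiding analytic technicalities altogether, but the residue proof is shorter and makes the geometric meaning of the substitution $z=w/\psi(w)$ transparent.
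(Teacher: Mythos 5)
Your proof is correct, but there is nothing in the paper to compare it against: the paper states this result as a classical theorem (Theorem A) and delegates the proof entirely to Gessel's survey \cite{Gessel2016}. Your argument is the standard residue-theoretic proof of Lagrange inversion, and you execute it soundly: the analytic implicit function theorem at $F(z,w)=w-z\psi(w)$, $(0,0)$, legitimately produces the holomorphic germ $g$ with $g(0)=0$ and $g'(0)=\psi(0)>0$ (a small notational slip: the paper writes $b_0$ for the constant coefficient of $\psi$, not $a_0$); the identity $z'(w)/z(w)^{n+1}=-\tfrac1n\,\tfrac{d}{dw}\bigl(z(w)^{-n}\bigr)$ is right; the integration by parts over the closed contour is justified because $H(w)z(w)^{-n}$ is single-valued along $g(\gamma)$, so the boundary term genuinely cancels; and $z(w)^{-n}=\psi(w)^n/w^n$ (using $\psi\neq0$ near $0$, since $\psi(0)>0$) converts the last integral into $\tfrac1n\,\textsc{coeff}_{n-1}[H'(w)\psi(w)^n]$ via Cauchy's formula. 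The one point you gloss over is the winding number: for the substitution $w=g(z)$ and the final residue evaluation to hold without an extra integer factor, you need $g(\gamma)$ to wind exactly once, positively, around the origin; this follows from the argument principle (for small $\gamma$, $g$ has a single simple zero inside, and $g$ is injective there since $g'(0)\neq0$), and deserves a sentence rather than the phrase ``encircling the origin.'' Compared with the purely formal-power-series verification you mention as an alternative — which is closer in spirit to the algebraic treatments in \cite{Gessel2016} — your analytic route has the advantage of being self-contained within the paper's complex-analytic framework, where $g$ is in any case treated as a genuine holomorphic function on $\D(0,R_g)$, at the modest cost of the contour bookkeeping you correctly identify; either route establishes the statement, with the special case $H(z)=z$ giving $A_n=\tfrac1n\,\textsc{coeff}_{n-1}[\psi(z)^n]$ and $A_0=0$ immediate from $g(0)=0$.
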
~See, for instance, \cite{Gessel2016} for a proof of this result. 
	\medskip

	The inverse function of \( g(z) \) is given by \( z/\psi(z) \). This will be relevant later on, particularly in connection with Lagrange's inversion formula. This explicit form of the inverse enables a natural reparametrization of the Khinchin family associated with \( g \). This result, as we will see later, is fundamental in the study of combinatorial and random rooted trees.
	\smallskip
	
	We will denote by \( (Z_t) \) the Khinchin family associated with \( g \), the solution to Lagrange's equation with data \( \psi \). This Khinchin family will play a relevant role later on. Notice that $g$, the solution to Lagrange's equation with data $\psi$, is, by definition, a power series in the class $\K_s$, in fact $g(0) = 0$ and $g^{\prime}(0) = A_1 =b_0 \neq 0$, and therefore $(Z_t)$ is a shifted Khinchin family.

	\subsection{Function $t/\psi(t)$}
	
	Lagrange's equation gives that $g$ and $z/\psi(z)$ are inverse of each other and thus, for power series $\psi \in \K^{\star}$, we have that
	$$g(z/\psi(z))=z\, , \quad \mbox{ for } z \in \Omega_g = g(\D(0,R_g)) \subseteq \D(0,R_{\psi})\,.$$
Invoking Lagrange’s equation together with the fact that $\psi\in\mathcal \K$, one sees that $\psi$ cannot vanish in $\Omega_g$.
	In fact, since $g$ has non-negative coefficients, we conclude that
	\begin{align*}
	g(t/\psi(t))=t\, , \quad \mbox{for $t \in[0, \tau]$}\,,
	\end{align*}
	that is, the image of the interval $[0,\rho]$ by $g$ is the interval $g([0,\tau/\psi(\tau)]) = [0,\tau]$. One may verify this claim directly (using that $g$ is increasing and continuous on the closure of $\D(0,\rho)$), or else deduce it as a corollary of Theorem \ref{thm: Otter-Meir-Moon} below.
	\medskip
	
	For the function $t \mapsto t/\psi(t)$, we have
	\begin{lem}\label{lemma: monotony_t/psi(t)} For  $\psi \in \K^\star$ with apex $\tau \in (0,R_\psi)$, the function $t/\psi(t)$
		\begin{enumerate}
			\item is strictly increasing on the interval $[0,\tau)$
			\item is strictly decreasing on the interval $(\tau,R_{\psi})$
			\item has a maximum at $t = \tau$.
		\end{enumerate}
		For \(\psi\in\mathcal K\setminus\mathcal K^\star\), the function \(t/\psi(t)\) is strictly increasing for all \(t\in(0,R_\psi)\).
	\end{lem}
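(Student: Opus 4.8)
The plan is to reduce everything to the sign of the derivative of $h(t) := t/\psi(t)$ and then to the monotonicity of the mean function $m_\psi$. First I would differentiate using the quotient rule: for $t \in [0,R_\psi)$, where $\psi(t)>0$ because $a_0>0$ and all coefficients of $\psi$ are non-negative, one has
$$
h'(t) = \frac{\psi(t) - t\psi'(t)}{\psi(t)^2} = \frac{1}{\psi(t)}\left(1 - \frac{t\psi'(t)}{\psi(t)}\right) = \frac{1 - m_\psi(t)}{\psi(t)},
$$
where the last equality uses the formula $m_\psi(t)=t\psi'(t)/\psi(t)$ from \eqref{eq:m and sigma in terms of f}. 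Since $\psi(t)>0$ throughout $[0,R_\psi)$, the sign of $h'(t)$ coincides with the sign of $1-m_\psi(t)$, so the monotonicity of $h$ is entirely governed by whether $m_\psi(t)$ is less than, equal to, or greater than $1$.

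Next I would recall the two facts about $m_\psi$ established earlier: $m_\psi(0)=0$, and $m_\psi$ is strictly increasing on $(0,R_\psi)$ because $\sigma_\psi^2(t)=t\,m_\psi'(t)>0$ there. For $\psi\in\mathcal{K}^\star$ the apex $\tau$ is, by definition, the unique point with $m_\psi(\tau)=1$. Strict monotonicity then yields $m_\psi(t)<1$ for $t\in[0,\tau)$ and $m_\psi(t)>1$ for $t\in(\tau,R_\psi)$. Feeding these into the sign analysis of $h'$ gives $h'(t)>0$ on $[0,\tau)$ and $h'(t)<0$ on $(\tau,R_\psi)$, which proves claims (1) and (2); since $h$ increases up to $\tau$ and decreases afterward, $t=\tau$ is a global maximum, establishing (3).

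For the remaining case $\psi\in\mathcal{K}\setminus\mathcal{K}^\star$ I would use that $M_\psi=\lim_{t\uparrow R_\psi}m_\psi(t)\le 1$. Because $m_\psi$ is strictly increasing with supremum $M_\psi$, one has $m_\psi(t)<M_\psi\le 1$ for every $t\in(0,R_\psi)$: indeed, were $m_\psi(t_0)=M_\psi$ for some interior $t_0$, strict monotonicity would be violated on $[t_0,R_\psi)$. Hence $1-m_\psi(t)>0$ and $h'(t)>0$ on all of $(0,R_\psi)$, giving the asserted strict increase.

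The computation itself is routine; the only point demanding a little care is the strict inequality $m_\psi(t)<1$ in the non-star case when $M_\psi=1$, which I would justify by the boundedness-plus-strict-monotonicity argument above rather than by merely passing to the limit. I expect no genuine obstacle beyond ensuring $\psi$ does not vanish on $[0,R_\psi)$—guaranteed by $a_0>0$ and non-negativity of the coefficients—and correctly invoking the positivity of the variance to obtain the \emph{strict} monotonicity of $m_\psi$.
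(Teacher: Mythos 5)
Your proposal is correct and takes essentially the same route as the paper: both rest on the identity $\left(t/\psi(t)\right)' = \left(1-m_\psi(t)\right)/\psi(t)$ together with the strict monotonicity of $m_\psi$ (from $\sigma_\psi^2(t)=t\,m_\psi'(t)>0$) and the defining property $m_\psi(\tau)=1$ of the apex. Your additional care in the non-star case (deriving $m_\psi(t)<M_\psi\le 1$ from strict monotonicity rather than a bare limit) merely makes explicit what the paper's terser proof leaves implicit.
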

	
	\begin{proof}
		The result follows from the identity
		\begin{align*}
			\left(\frac{t}{\psi(t)}\right)^{\prime} = \frac{1}{\psi(t)}(1-m_{\psi}(t)), \quad \text{ for all } t \in [0,R_{\psi}).
		\end{align*}
		Recall that for power series $\psi \in \K$ the mean $m_{\psi}(t)$ is an increasing function and for power series $\psi \in \K^{\star}$, there exists $\tau \in (0,R_{\psi})$ such that $m_{\psi}(\tau) = 1$.
	\end{proof}
	
	Fix $\psi \in \K^{\star}$ such that $\psi^{\prime}(0) \neq 0$, from the previous lemma we obtain that the probability \(\P(Y_t=1)\) increases, attains its maximum at \(t=\tau\), and then decreases; whereas if the power series \(\psi\in\mathcal K\setminus\mathcal K^\star\), the probability \(\P(Y_t=1)\) is strictly increasing on \((0,R_\psi)\).
	
	\subsection{Radius of convergence of $g$} Now we study the radius of convergence of the solution to Lagrange's equation with data $\psi \in \K$.
	
	\begin{theo}\label{thm: radius_g} Let $\psi \in \K$ be a power series with radius of convergence $R_{\psi}>0$. Denote $g$ the solution of Lagrange's equation with data $\psi$, that is, $g(z) = z\psi(g(z))$\,,	
		then the radius of convergence of $g$ is given by 
		\[
		R_g = \sup_{t \in (0,R_{\psi})}\frac{t}{\psi(t)}.
		\]\\[.1 cm]
		In case that $\psi \in \K^{\star}$ we have $R_g = \rho = \tau/\psi(\tau)$\,.
	\end{theo}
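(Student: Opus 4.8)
The plan is to exploit that $g$ and $h(t):=t/\psi(t)$ are functional inverses, combined with Pringsheim's theorem. By Lagrange's inversion formula the coefficients $A_n=\textsc{coeff}_n[g]$ are non-negative with $A_1=\psi(0)=a_0>0$, so $g$ is real-analytic and strictly increasing on $[0,R_g)$ and, by Pringsheim's theorem, the point $z=R_g$ is a singularity of $g$. From $(\star)$ I record the identity $t=g(t)/\psi(g(t))=h(g(t))$ on $[0,R_g)$, so that $g$ inverts $h$; note that here $g(t)<R_\psi$ throughout, since $\psi$ has non-negative coefficients and $g(t)\ge R_\psi$ would make $\psi(g(t))$ diverge while $g(t)=t\psi(g(t))$ stays finite. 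I set $S:=\sup_{t\in(0,R_\psi)}h(t)$; by Lemma~\ref{lemma: monotony_t/psi(t)}, $S=h(\tau)=\tau/\psi(\tau)=\rho$ when $\psi\in\K^\star$, whereas $S=\lim_{t\uparrow R_\psi}h(t)$ when $\psi\in\K\setminus\K^\star$. The whole theorem then reduces to showing $R_g=S$.

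The core of the argument is to identify the limit $L:=\lim_{t\uparrow R_g}g(t)\in(0,+\infty]$. First, $g$ cannot take values in the decreasing branch $(\tau,R_\psi)$ of $h$: if $g(t_0)>\tau$ for some $t_0<R_g$, then on the interval where $g\in(\tau,R_\psi)$ the identity $t=h(g(t))$ would force $t$ to \emph{decrease} (since $h$ is strictly decreasing there, by Lemma~\ref{lemma: monotony_t/psi(t)}), a contradiction; hence $L\le\tau$ for $\psi\in\K^\star$, and $L\le R_\psi$ in general. Second, $L$ cannot lie in the interior of the increasing branch: if $L<\tau$ (resp. $L<R_\psi$ when $\psi\in\K\setminus\K^\star$), then $h$ is analytic near $L$ with $h'(L)=\tfrac{1}{\psi(L)}(1-m_\psi(L))>0$, so by the inverse function theorem $h$ has an analytic local inverse $\phi$ near $R_g=h(L)$ with $\phi(R_g)=L$; since $\phi$ and $g$ both invert $h$ and agree for real $t$ slightly below $R_g$, $\phi$ continues $g$ analytically across $R_g$, contradicting that $R_g$ is a singularity. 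Since also $L>0$ (because $g'(0)=A_1>0$), this forces $L=\tau$ for $\psi\in\K^\star$ and $L=R_\psi$ for $\psi\in\K\setminus\K^\star$.

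With $L$ so identified, passing to the limit in $t=h(g(t))$ gives $R_g=\lim_{t\uparrow R_g}t=\lim_{w\uparrow L}h(w)$, which is exactly $h(\tau)=\rho$ for $\psi\in\K^\star$ (so $R_g=\rho=\tau/\psi(\tau)$) and $\lim_{w\uparrow R_\psi}h(w)=S$ for $\psi\in\K\setminus\K^\star$; in both cases $R_g=S=\sup_{t\in(0,R_\psi)}h(t)$, as claimed. The delicate step, and the one I would check most carefully, is the identification of $L$: both that $g$ cannot cross the apex into the decreasing branch and, above all, that $L$ cannot stall in the interior of the increasing branch. The latter is where Pringsheim's theorem is essential, since it is what converts the analytic invertibility of $h$ (valid wherever $h'\neq0$) into the statement that the only obstruction to continuing $g$ is reaching the critical value $h(\tau)=\rho$ of $h$ in the case $\psi\in\K^\star$ — geometrically a square-root branch point of $g$ at $\rho$, since $h'(\tau)=0$ while $h''(\tau)=-m_\psi'(\tau)/\psi(\tau)<0$ — or exhausting the domain of $\psi$ at $R_\psi$ in the case $\psi\in\K\setminus\K^\star$.
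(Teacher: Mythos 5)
Your overall strategy --- treating $g$ and $h(t)=t/\psi(t)$ as functional inverses, using Pringsheim to make $z=R_g$ a singular point, and identifying $L=\lim_{t\uparrow R_g}g(t)$ --- is sound, and in the case $\psi\in\K^\star$ your argument is complete, modulo the routine remarks that $0<R_g<\infty$ before Pringsheim is invoked ($R_g>0$ by the inverse function theorem at $0$, since $h'(0)=1/\psi(0)\neq 0$; and $R_g\le\rho<\infty$ because $t=h(g(t))\le h(\tau)$ while $g$ stays below $\tau$). The genuine gap is in the case $\psi\in\K\setminus\K^\star$ with $R_\psi<\infty$: your parenthetical justification that $g(t)<R_\psi$ on all of $[0,R_g)$ --- ``$g(t)\ge R_\psi$ would make $\psi(g(t))$ diverge'' --- is vacuous exactly there. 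Indeed, $m_\psi(t)\le 1$ means $(\ln\psi)'(t)\le 1/t$, and integrating gives $\psi(R_\psi^-)<\infty$ automatically in this regime, so $\psi(g(t))$ does \emph{not} blow up as $g(t)\uparrow R_\psi$. Nothing you wrote then excludes a $t^*<R_g$ with $g(t^*)=R_\psi$, at which the identity $t=h(g(t))$ breaks; your limit computation would identify $t^*=\lim_{w\uparrow R_\psi}h(w)$ rather than $R_g$, and both the exclusion of a stalled $L<R_\psi$ and the final equality $R_g=S$ would be unproved. Note that this is precisely the regime responsible for the theorem's $\sup$ formulation, so it cannot be waved away.

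The gap is fixable with the same circle of ideas, applied now to $\psi$ rather than to $g$. If $g$ were analytic at such a $t^*$ with $g(t^*)=R_\psi$, then from $g'(t)=\psi(g(t))/(1-t\psi'(g(t)))$ and $t^*\psi'(R_\psi^-)=m_\psi(R_\psi^-)=M_\psi\le 1$ (using $t^*=R_\psi/\psi(R_\psi^-)$) one sees that either $M_\psi=1$, in which case $g'(t)\to\infty$ as $t\uparrow t^*$ and $g$ cannot be analytic at $t^*$; or $M_\psi<1$, in which case $g'(t^*)>0$, so $g^{-1}$ is analytic near $R_\psi$ and the Lagrange relation $\psi(w)=w/g^{-1}(w)$ continues $\psi$ analytically across $R_\psi$, contradicting Pringsheim applied to $\psi$ (which also has non-negative coefficients). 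So Pringsheim must be used twice, for $g$ and for $\psi$. Two further remarks. First, the ``no stalling'' half of your argument can be replaced by one line: Cauchy's estimate combined with Lagrange inversion gives $A_n=\frac1n\,\mathrm{coeff}_{n-1}[\psi^n]\le \frac{t}{n}\,(\psi(t)/t)^{n}$ for every $t\in(0,R_\psi)$, whence $R_g\ge\sup_t t/\psi(t)$ directly, leaving only the inequality $R_g\le S$ (the direction affected by the gap above) to the inverse-function analysis. Second, yours is in any case a different route from the paper's apparatus: the paper prints no proof of this theorem, and its surrounding text obtains the $\K^\star$ case from the Otter--Meir--Moon asymptotics (Theorem~\ref{thm: Otter-Meir-Moon}), which give $A_n\rho^n\sim \frac{Q_\psi\,\tau}{\sqrt{2\pi}\,\sigma_\psi(\tau)}\,n^{-3/2}$ along $n\equiv 1 \bmod Q_\psi$ and hence $R_g=\rho$ at once; your singularity-analysis route is more self-contained and also covers $\K\setminus\K^\star$, once repaired as above.
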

	
%	\begin{proof} Using Lagrange's inversion formula we have 
%	\begin{align*}
%	A_n = \frac{1}{n}\textsc{coeff}_{n-1}[\psi(z)^n]\,, \quad \text{ for any } n \geq 1\,.
%	\end{align*}
%	
%   Using Cauchy's estimate, on $|z|=t$, we find that 
%   \begin{align*}
%   	A_n \leq \frac{1}{n}\frac{\psi(t)^{n}}{t^{n-1}}\,, \quad \text{ for any } n \geq 1 \text{ and } 0<t<R_{\psi}\,,
%   \end{align*}
%   therefore, Cauchy-Hadamard's theorem gives that 
%   \begin{align*}
%   	\frac{1}{R_{g}} \leq \frac{\psi(t)}{t}\,, \quad \text{ for any } n \geq 1\,.
%   \end{align*}
%	and this implies that 
%	\begin{align*}
%	\sup_{t \in (0,R_{\psi})}\frac{t}{\psi(t)} \leq R_g\,.
%	\end{align*}
%	
%	Now prove that $g(\D(0,R_g)) \subseteq \D(0,R_{\psi})$; Notice that if this inclusion holds we obtain that 
%	\begin{align*}
%	R_g = \sup_{t \in (0,R_{g})}\frac{g(t)}{\psi(g(t))} \leq \sup_{t \in (0,R_{\psi})}\frac{t}{\psi(t)}
%	\end{align*}
%	Here we use Lagrange's equation $g(t) = t\psi(g(t))$\,. Let's prove that 
%	$$\lim_{t \uparrow R_{g}}g(t) = \sup_{t \in (0,R_{\psi})}g(t) \leq R_{\psi}.$$ 
%	Assume that $\lim_{t \uparrow R_g}g(t) > R_{\psi}$, then, because $g(t)$ is increasing, there exists some value $T \in (0,R_{\psi})$ such that $g(T) = R_{\psi}$. Because $g$ has non-negative coefficients we have that $g^{\prime}(T) \neq 0$ (otherwise $g$ would be constant, in fact $g \equiv 0$ and inclussion holds). The function $g$ is bijective from a neighborhood of $T$ onto a neighborhood of $R_{\psi}$ and this contradicts Vivanti-Pringsheim Theorem. This concludes the proof. 
%	\end{proof}

	The radius of convergence of this power series, as we shall see later, is closely related to the behavior of various probabilities associated with a Galton-Watson process.
	
	\smallskip
	In the case where \( \psi \in \mathcal{K}^{\star} \) and \( \textsc{coeff}_1[\psi(z)] = b_1 = 1 \), the radius of convergence of the solution \( g \) to Lagrange's equation with data \( \psi \) coincides with the probability.
	\begin{align*}
		\P(Y_{\tau} = 1) = \frac{\tau}{\psi(\tau)} = \rho\,. 
	\end{align*}
	More generally: for power series \( \psi \in \mathcal{K}^{\star} \) such that \( b_1 \geq 1 \) we have that \( \rho \leq 1 \).
	\smallskip
	
	Note that when \(\psi\in\mathcal K^\star\), the Khinchin family \((Z_t)\) associated with the solution of Lagrange’s equation is naturally parametrized by \(t\in[0,\rho]\), where \(\rho=\tau/\psi(\tau)\). For further details about extending Khinchin families to boundary values we refer to \cite{K_dos}.
	\smallskip
	
	\subsection{Otter-Meir-Moon asymptotic formula}
	
	We now present an asymptotic formula, for the coefficients \( A_n \) of the solution to Lagrange’s equation with data \( \psi \in \mathcal{K}^\star \), originally due to Otter, Meir, and Moon, see \cite{MeirMoon,Otter}. For additional extensions, we refer the reader to~\cite{K_dos}, where a proof based on Khinchin families is provided.
	\smallskip
	
	Recall that for power series $\psi(z) = \sum_{n \geq 0}b_nz^n \in \K$ we denote 
		\[
	Q_{\psi}\triangleq \gcd\{n \geq 1 : b_n \neq 0\} = \lim_{N \to \infty} \gcd\{1 \leq n \leq N : b_n \neq 0\}.
	\]
	
	%Before proceeding to the proof of this theorem we establish the following lemma.
	%\begin{lem}[Hayman's formula for Lagrange's equation] Fix $\psi \in \K^{\star}$, then, for any $n \geq 1$, we have
	%\begin{align*}
	%A_n =  \frac{Q_{\psi}}{{2\pi}\sigma_{\psi}(\tau)}\frac{\psi(\tau)^n}{\tau^{n-1}n^{3/2}}\int_{-\pi \sigma_{\psi}(\tau)\sqrt{n}}^{\pi \sigma_{\psi}(\tau)\sqrt{n}}\E(e^{i\theta \breve{Y}_{\tau}/\sqrt{n}})^n e^{i\theta/(\sigma_{\psi}(\tau)\sqrt{n})}d\theta\,. 
	%\end{align*}
	%\end{lem}
	%See \cite{K_dos} for a proof of this result. 
	
	%\begin{proof} Lagrange's inversion formula gives that 
	%\begin{align}\label{eq: lagrange's_inproof}
	%A_n = \textsc{coeff}_n[g(z)] = \frac{1}{n}\textsc{coeff}_{n-1}[\psi(z)^n].
	%\end{align}
	%
	%Applying Cauchy's formula to the right-hand side of \eqref{eq: lagrange's_inproof} we find that 
	%\begin{align*}
	%A_n = \frac{1}{2\pi n}\int_{-\pi}^{\pi}\psi(te^{i\theta})^{n}e^{-i\theta(n-1)}d\theta
	%\end{align*}
	%and therefore using Lemma [ref] and the definition of the normalized characteristic function of a Khinchin family, see [ref] above, we find that 
	%\begin{align*}
	%A_n = \frac{\psi(t)^n}{2\pi t^{n-1}\sigma_{\psi}(t)}\frac{1}{n^{3/2}}\int_{-\pi \sigma_{\psi}(t)\sqrt{n}}^{\pi\sigma_{\psi}(t)\sqrt{n}}\E(e^{i\theta \breve{Y}_t})^n d\theta
	%\end{align*}
	%
	%\end{proof}

	\begin{thmletra}[Otter-Meir-Moon, \cite{MeirMoon,Otter}]\label{thm: Otter-Meir-Moon} Let \( \psi \in \mathcal{K}^{\star} \) be a power series. Then, for indices $n \equiv 1$, $\mod Q_{\psi}$, the coefficients \( A_n \) of the solution \( g \) to Lagrange’s equation with data \( \psi \) satisfy the asymptotic estimate
		\begin{align*}
			A_n \sim \frac{Q_{\psi}}{\sqrt{2\pi}\, \sigma_{\psi}(\tau)}  \frac{\psi(\tau)^n}{\tau^{n-1}}  \frac{1}{n^{3/2}}, \quad \text{as } n \to \infty 
		\end{align*}
		Here, \( \tau \) denotes the apex of \( \psi \) (i.e $m_{\psi}(\tau) = 1$) 
	\end{thmletra}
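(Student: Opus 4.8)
The plan is to combine Lagrange's inversion formula with a probabilistic reading of the relevant coefficient through the Khinchin family $(Y_t)$ of $\psi$, and then to invoke a local limit theorem. First, by Lagrange's inversion formula one has $A_n=\frac1n\,\textsc{coeff}_{n-1}[\psi(z)^n]$ for every $n\ge1$, so the whole problem reduces to estimating the single coefficient $\textsc{coeff}_{n-1}[\psi(z)^n]$ as $n\to\infty$.

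The next step is to interpret that coefficient probabilistically. Fix $t\in(0,R_\psi)$ and let $Y_t^{(1)},\dots,Y_t^{(n)}$ be independent copies of $Y_t$, with sum $S_n=Y_t^{(1)}+\cdots+Y_t^{(n)}$. Since the probability generating function of $Y_t$ is $\psi(tz)/\psi(t)$, expanding $\psi(z)^n$ and comparing coefficients yields
\[
\textsc{coeff}_{n-1}[\psi(z)^n]=\P(S_n=n-1)\,\frac{\psi(t)^n}{t^{\,n-1}},\qquad t\in(0,R_\psi).
\]
This identity holds for every admissible $t$; the idea is to pick $t$ so that the target value $n-1$ sits at the center of the law of $S_n$. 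As $\E[S_n]=n\,m_\psi(t)$ and $\psi\in\mathcal K^\star$ has an apex $\tau$ with $m_\psi(\tau)=1$, the choice $t=\tau$ gives $\E[S_n]=n$, so the deviation $(n-1)-\E[S_n]=-1$ stays bounded uniformly in $n$. With this choice one obtains
\[
A_n=\frac1n\,\P(S_n=n-1)\,\frac{\psi(\tau)^n}{\tau^{\,n-1}}.
\]

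The heart of the argument is a local limit theorem (Gnedenko's theorem) for the i.i.d.\ sum $S_n$. Here $Y_\tau$ has mean $1$, finite and strictly positive variance $\sigma_\psi^2(\tau)$ (finite since $\tau$ lies strictly inside the disk of convergence, and positive since the existence of the apex forces $\psi''(\tau)>0$), and its law is supported on the lattice $Q_\psi\Z_{\ge0}$ with maximal span exactly $Q_\psi=\gcd\{n\ge1:b_n\neq0\}$. The lattice structure is what produces the congruence restriction: $\P(S_n=n-1)$ can be nonzero only when $n-1\in Q_\psi\Z$, i.e.\ $n\equiv1\ (\mathrm{mod}\ Q_\psi)$. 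For such $n$, Gnedenko's local limit theorem gives
\[
\P(S_n=n-1)\sim\frac{Q_\psi}{\sqrt{2\pi n}\,\sigma_\psi(\tau)}\,\exp\!\left(-\frac{\big((n-1)-n\big)^2}{2n\,\sigma_\psi^2(\tau)}\right),
\]
and since the exponent equals $-1/(2n\sigma_\psi^2(\tau))\to0$, the exponential factor tends to $1$. Substituting into the expression for $A_n$ and using $\tfrac1n\cdot\tfrac1{\sqrt n}=n^{-3/2}$ yields precisely the claimed estimate.

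The main obstacle is the rigorous application of the local limit theorem. One must correctly identify the maximal lattice span of $Y_\tau$ as $Q_\psi$ (note that $b_0>0$ places $0$ in the support, so the span is the gcd of the positive support, in agreement with the definition of $Q_\psi$), and one must invoke a version of Gnedenko's theorem that is uniform in the lattice point, so that evaluation at the moving target $m=n-1$ is legitimate. Checking the moment hypotheses is routine because $\tau$ is an interior point of convergence where $\psi$ has finite derivatives of all orders; the only genuinely delicate point is this uniform local estimate for lattice sums.
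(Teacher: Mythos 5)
Your argument is correct and is essentially the proof the paper itself points to: the paper states Theorem~\ref{thm: Otter-Meir-Moon} without proof, citing \cite{MeirMoon,Otter} and explicitly referring to \cite{K_dos} for a proof ``based on Khinchin families,'' which is exactly your route --- Lagrange inversion to reduce to $\textsc{coeff}_{n-1}[\psi(z)^n]$, the tilting identity $\textsc{coeff}_{n-1}[\psi(z)^n]=\P(S_n=n-1)\,\psi(t)^n/t^{n-1}$ evaluated at the apex $t=\tau$ so that $\E[S_n]=n$, and Gnedenko's lattice local limit theorem (with maximal span $Q_{\psi}$, legitimately identified since $b_0>0$ puts $0$ in the support) yielding $\P(S_n=n-1)\sim Q_{\psi}/(\sqrt{2\pi n}\,\sigma_{\psi}(\tau))$ along $n\equiv 1 \bmod Q_{\psi}$. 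All the delicate points you flag (positivity and finiteness of $\sigma_{\psi}^2(\tau)$, uniformity of the local estimate over the lattice so the moving target $n-1$ is allowed, and the congruence restriction arising from the span) are handled correctly, so the proof is complete.
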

	%\begin{proof}  Lemma gives that 
	%\begin{align*}
	%	A_n =  \frac{Q_{\psi}}{{2\pi}\sigma_{\psi}(\tau)}\frac{\psi(\tau)^n}{\tau^{n-1}n^{3/2}}\int_{-\pi \sigma_{\psi}(\tau)\sqrt{n}}^{\pi \sigma_{\psi}(\tau)\sqrt{n}}\E(e^{i\theta \breve{Y}_{\tau}/\sqrt{n}})^n e^{i\theta/(\sigma_{\psi}(\tau)\sqrt{n})}d\theta\,. 
	%\end{align*}
	%Therefore, by applying the local central limit theorem for lattice random variables (see \cite{K_dos} for full details), we conclude that
	%\begin{align*}
	%A_n \sim  \frac{Q_{\psi}}{\sqrt{2\pi} \sigma_{\psi}(\tau)}\frac{\psi(\tau)^n}{\tau^{n-1}}\frac{1}{n^{3/2}} \,, \quad \text{ as } n \rightarrow \infty\,.
	%\end{align*}
	%\end{proof}
	%See~\cite{K_dos} for a detailed proof of this result, as well as for several extensions to boundary cases.
	%\smallskip
	
	For any $\psi \in \K^{\star}$, the previous asymptotic formula for the coefficients $A_n$ of the solution to Lagrange's equation $g$ with data $\psi$ gives that, for indices $n \equiv 1$, $\mod Q_{\psi}$, we have
	\begin{align*}
		A_n\rho^{n} \sim \frac{Q_{\psi} \, \tau}{\sqrt{2\pi}\sigma_{\psi}(\tau)}\frac{1}{n^{3/2}}\,, \quad \text{ as } n \rightarrow \infty\,.
	\end{align*}
	This implies that for any \(\psi\in\mathcal{K}^\star\), the solution \(g\) extends continuously to the closure of the disk \({\D(0,\rho)}\).  In particular,  
	\[
	\lim_{t\uparrow\rho}g(t)=\tau,
	\]  
	so it is natural to set \(g(\rho)=\tau\). We will use this relation repeteadly along this note. 
	
	\section{Lagrange's equation and simple varieties of trees}
	
	Fix $\psi(z) = \sum_{n \geq 0}b_nz^n \in \K$ with radius of convergence $R_{\psi}>0$. A simple variety of trees is a family of weighted rooted plane trees whose weighted generating function is the solution of Lagrange's equation with data $\psi$, that is, is a power series $g$ verifying the relation
	\begin{align*}
		g(z) = z\psi(g(z))\,.
	\end{align*}
	Alternatively some authors define these simple varieties for power series $\psi \in \K^{\star}$, this is not the case here. For further details see, for instance, \cite{Drmota},  \cite[p. 452]{Flajolet} and \cite{Janson2012}. 
	\smallskip
	
	We now relate the simple varieties of trees with the class of rooted plane trees endowed with the weight
	\begin{align}\label{eq:weights}
		\omega(a) = \prod_{j = 0}^{\infty}b_j^{k_j(a)}\,, \quad \text{ for any } a \in \mathcal{G}\,.
	\end{align}
	Here $k_j(a)$ counts the number of nodes in $a$ with exactly $j$ descendants. Observe that the weight $\omega$ depends on the power series $\psi$, but we omit this from the notation for simplicity. The next result is standard, see \cite{Pitman} for similar arguments.
	\begin{propoletra}\label{propo: weights} Let $\psi \in \K$ be a power series with positive radius of convergence, then 
		\begin{align*}
			\textsc{coeff}_n[g(z)] = A_n = \sum_{a \in \mathcal{G}_n}\omega(a)
		\end{align*}
		here $g$ denotes the solution to Lagrange's equation with data $\psi$.
	\end{propoletra}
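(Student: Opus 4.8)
The plan is to identify the weighted generating function of rooted plane trees with the solution $g$ of Lagrange's equation, and then match coefficients. Set
$$F(z) \;:=\; \sum_{n\ge 1}\Big(\sum_{a\in\mathcal G_n}\omega(a)\Big)z^n \;=\; \sum_{a\in\mathcal G}\omega(a)\,z^{|a|}.$$
First I would note that $F$ is a well-defined formal power series with non-negative coefficients and $F(0)=0$: for each $n$ there are only finitely many (in fact Catalan-many) rooted plane trees of size $n$, so every coefficient is a finite sum. Working at the level of formal power series, where each coefficient identity involves only finitely many trees, sidesteps all questions of analytic convergence.

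Next I would rewrite the weight in \emph{node-multiplicative} form. Grouping the product $\prod_{j\ge 0}b_j^{k_j(a)}$ by individual nodes rather than by outdegree value gives
$$\omega(a)=\prod_{v\in a} b_{\deg(v)},$$
where $\deg(v)$ denotes the number of children (outdegree) of the node $v$; this is just a reindexing of the same finite product. The core step is then to feed the recursive specification $\mathcal G=\mathcal Z\times\mathrm{SEQ}(\mathcal G)$ into this multiplicative weight. Each plane tree $a$ decomposes uniquely as a root of outdegree $k=\deg(\mathrm{root})\ge 0$ carrying an ordered list of subtrees $T_1,\dots,T_k$. Since $\omega$ is node-multiplicative and the root contributes exactly the factor $b_k$, we have $\omega(a)=b_k\prod_{i=1}^k\omega(T_i)$ and $|a|=1+\sum_{i=1}^k|T_i|$. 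Summing over all trees and organizing by the root outdegree $k$ turns the ordered-sequence structure into powers of $F$:
$$F(z)=\sum_{k\ge 0} b_k\, z\Big(\sum_{T\in\mathcal G}\omega(T)z^{|T|}\Big)^{k}=z\sum_{k\ge 0} b_k\,F(z)^k=z\,\psi\big(F(z)\big),$$
where the $k=0$ term $b_0 z$ records the single-node tree.

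Thus $F$ satisfies Lagrange's equation $(\star)$ and has $F(0)=0$. Since this equation determines the coefficients recursively from $F(0)=0$, the uniqueness of its power series solution forces $F=g$; comparing coefficients yields $A_n=\sum_{a\in\mathcal G_n}\omega(a)$ for every $n\ge 1$ (with $A_0=0=F(0)$). I expect the only genuine subtlety to be the passage from the ordered-sequence combinatorial construction $\mathrm{SEQ}(\mathcal G)$ to the algebraic factor $F(z)^k$ — that is, checking that summing weights over all ordered $k$-tuples of subtrees really factors as $F(z)^k$ and that the interchange with the sum over $k$ is legitimate. Phrasing everything coefficientwise, so that each coefficient of $F$ is extracted from finitely many trees of bounded size, makes this rearrangement a finite manipulation and removes the last obstacle.
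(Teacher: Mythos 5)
Your proof is correct, but it takes a genuinely different route from the paper. The paper's argument (given as a sketch, with a pointer to Pitman) goes through Lagrange's inversion formula: it starts from $A_n=\frac1n\,\textsc{coeff}_{n-1}[\psi(z)^n]$, expands $\psi(z)^n$ by the multinomial theorem into a sum over outdegree profiles $(k_0,k_1,\dots)$ with $\sum_j k_j=n$ and $\sum_j jk_j=n-1$, and then uses the enumerative fact that the number of rooted plane trees with a prescribed outdegree profile is $\frac1n\binom{n}{k_0,k_1,\dots}$ (a cycle-lemma count), so that grouping the trees in $\mathcal G_n$ by profile reproduces exactly the multinomial expansion. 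You instead bypass both Lagrange inversion and the cycle lemma: you derive the functional equation $F(z)=z\,\psi(F(z))$ directly from the specification $\mathcal G=\mathcal Z\times\mathrm{SEQ}(\mathcal G)$ via the node-multiplicative form of the weight, and conclude $F=g$ by uniqueness of the formal power series solution with $F(0)=0$ — a uniqueness the paper itself asserts and uses elsewhere (e.g.\ in identifying $g_t$). Your coefficientwise framing correctly handles the two convergence-type worries (the sum over root outdegrees $k$ and the composition $\psi(F)$), since $F$ has valuation $1$, so each coefficient identity is a finite computation. What each approach buys: yours is more self-contained and elementary, in the symbolic-method style, and needs no enumerative input; the paper's yields the finer refined identity — the total weight of trees in $\mathcal G_n$ with a given outdegree profile is $\frac1n\binom{n}{k_0,k_1,\dots}\prod_j b_j^{k_j}$ — and keeps Lagrange inversion at center stage, consistent with how it is exploited throughout the rest of the paper.
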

	
	The proof of this result follows by combining Lagrange’s inversion formula with the multinomial theorem (and counting rooted plane trees having the same list of outdegrees $(k_0,k_1,\dots)$); see \cite{Pitman} for similar arguments.
	\smallskip
	
	%	\begin{proof} Using Lagrange's inversion formula and the multinomial theorem we find that 
		%		\begin{align*}
			%			\textsc{coeff}_n[g(z)] = \frac{1}{n}\textsc{coeff}_{n-1}[\psi(z)^n] =\frac{1}{n}\sum_{\substack{k_0+k_1+\dots+k_{n-1} = n \\ k_1+2k_2+\dots+(n-1)k_{n-1} = n-1}}{n \choose k_0,k_1,\dots,k_{n-1}}\prod_{j = 0}^{n-1}b_j^{k_j}\,,
			%		\end{align*}
		%		
		%		
		%		Now observe that
		%		\begin{align*}
			%			\sum_{a \in \mathcal{G}_n}\omega(a) = \sum_{a \in \mathcal{G}_n}\prod_{j = 0}^{n-1}b_j^{k_j} = \frac{1}{n}\sum_{\substack{k_0+k_1+\dots+k_{n-1} = n \\ k_1+2k_2+\dots+(n-1)k_{n-1} = n-1}}{n \choose k_0,k_1,\dots,k_{n-1}}\prod_{j = 0}^{n-1}b_j^{k_j}
			%		\end{align*}
		%		here we use that the number of rooted plane trees with $n$ nodes with a given list of outdegrees $(k_0,k_1,\dots,k_{n-1})$ is
		%		\begin{align*}
			%			\frac{1}{n}{n \choose k_0,k_1,\dots,k_{n-1}}\,.
			%		\end{align*}
		%		See, for instance, \cite{Pitman}. This concludes the proof. 
		%	\end{proof}
	In a similar way, for a forest with \(m\) rooted trees and \(n\) nodes, the $n$-th coefficient of \(g(z)^m\) equals the sum of the weights of all rooted forests with \(n\) nodes and \(m\) rooted trees.
	
	\section{The parametric Galton Watson process}
	
	Given a power series $\psi \in \K$ and Khinchin family $(Y_t)$, in this section we study the Galton-Watson process with offspring distribution $Y_t$. This construction gives rise to a family of parametric Galton-Watson processes $T_t$ which depends on $t \in (0,R_{\psi})$. For each $t \in (0,R_{\psi})$ the rooted random tree $T_t$ takes values on the Polish space $\mathbb{T} = \mathcal{G} \cup \mathcal{G}_{\infty}$.
	
	\subsection{The Galton-Watson process with offspring distribution $Y_t$}
	
	Fix \( \psi \in \mathcal{K} \) a power series with radius of convergence \( R_{\psi} > 0 \), and denote by \( (Y_t)_{t \in (0,R_{\psi})} \) the Khinchin family associated with \( \psi \). Throughout this section, we consider these as fixed, unless explicitly stated otherwise.
	\smallskip
	
	For each \( t \in (0,R_{\psi})\), we can generate a Galton-Watson process \( T_t \) with offspring distribution \( Y_t \). 
	We denote by \( (T_t)_{t \in (0,R_{\psi})} \) the parametric family of Galton-Watson processes, where for each \( t \in (0,R_{\psi}) \), 
	the random tree \( T_t \) has an offspring distribution given by \( Y_t \). The random variable \( T_t \) takes values in \( \mathbb{T} \) and has a probability mass function given by:
	\begin{align}\label{eq: measure}
		\P(T_t = a) = \prod_{j = 0}^{\infty}\P(Y_t = j)^{k_j(a)}\,, \quad \text{ for any } a \in \mathbb{T}\,,
	\end{align}
	with the convention that $0^0 = 1$. Recall that $k_j(a)$ counts the number of nodes in $a$ with exactly $j$ descendants. This measure assigns probability $0$ to the singletons in $\mathcal{G}_{\infty}$. 
	\smallskip
	
	Before moving forward, we prove the following lemma. This result establishes a connection between the probabilistic and combinatorial interpretations of these weighted rooted trees.
	\begin{lem} \label{lemma: weight_prob}
		Fix \( t \in (0,R_{\psi}) \). Then, for any \( n \geq 1 \) and \( a \in \mathcal{G}_n \), we have
		\begin{align*}
			\P(T_t = a) = \frac{\omega(a)t^{n-1}}{\psi(t)^n}\,,  
		\end{align*}
		with the convention that $0^0 = 1$. Here, \( \omega(a) \) denotes the weight of the rooted plane tree \( a \in \mathcal{G}_n \).
	\end{lem}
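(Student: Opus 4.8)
The plan is to compute $\P(T_t = a)$ directly from the mass function in equation~\eqref{eq: measure} and then recognize the product of offspring probabilities as the weight $\omega(a)$ times explicit powers of $t$ and $\psi(t)$. First I would substitute the Khinchin mass function $\P(Y_t = j) = b_j t^j / \psi(t)$ into the defining product
\[
\P(T_t = a) = \prod_{j=0}^{\infty}\P(Y_t = j)^{k_j(a)} = \prod_{j=0}^{\infty}\left(\frac{b_j\, t^j}{\psi(t)}\right)^{k_j(a)}\,,
\]
valid for $a \in \mathcal{G}_n$ with $n \geq 1$. The key step is then to separate this product into three factors: the factor $\prod_{j} b_j^{k_j(a)}$, which is exactly $\omega(a)$ by the definition in~\eqref{eq:weights}; the factor $\prod_{j} (t^j)^{k_j(a)} = t^{\sum_j j\, k_j(a)}$; and the factor $\prod_{j}(1/\psi(t))^{k_j(a)} = \psi(t)^{-\sum_j k_j(a)}$.

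The only real content is bookkeeping on the two exponents, which I would handle with two combinatorial identities for any rooted plane tree $a \in \mathcal{G}_n$. First, $\sum_{j \geq 0} k_j(a) = n$, since every one of the $n$ nodes has some outdegree $j$ and is counted exactly once. Second, $\sum_{j \geq 0} j\, k_j(a) = n-1$: the total number of children summed over all nodes equals the number of edges, and a tree on $n$ nodes has exactly $n-1$ edges (equivalently, every node except the root is the child of exactly one node). Substituting these two identities gives
\[
\P(T_t = a) = \omega(a)\cdot t^{\,n-1}\cdot \psi(t)^{-n} = \frac{\omega(a)\, t^{\,n-1}}{\psi(t)^n}\,,
\]
which is the claimed formula.

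I do not expect a genuine obstacle here; the statement is essentially a rewriting of the definition of the measure together with the two elementary edge- and node-counting identities. The one point deserving a word of care is the treatment of the convention $0^0 = 1$ and the handling of leaves: for a leaf (outdegree $0$) the factor is $\P(Y_t = 0)^{k_0(a)} = (b_0/\psi(t))^{k_0(a)}$, and since $b_0 = \psi(0) > 0$ for $\psi \in \mathcal{K}$ this contributes genuinely to $\omega(a)$ with no degeneracy; the convention $0^0 = 1$ is only needed to discard the empty factors $\P(Y_t = j)^{0}$ for outdegrees $j$ not realized in $a$, so that the infinite product is in fact a finite product. Making explicit that only finitely many $k_j(a)$ are nonzero (a tree on $n$ nodes has bounded outdegrees) confirms that all manipulations are finite and the rearrangement into the three factors above is justified.
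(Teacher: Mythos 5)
Your proof is correct and follows essentially the same route as the paper: substitute the Khinchin mass function $\P(Y_t=j)=b_jt^j/\psi(t)$ into the defining product \eqref{eq: measure}, then apply the two counting identities $\sum_j k_j(a)=n$ and $\sum_j j\,k_j(a)=n-1$ to collect the exponents. The only cosmetic difference is that the paper truncates the product at $j=n-1$ from the outset, whereas you keep the infinite product and note it is effectively finite — the same observation in different clothing.
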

	
	\begin{proof} 
		For any \( n \geq 1 \) and \( a \in \mathcal{G}_n \), we have
		\begin{align*}
			\P(T_t = a) &= \prod_{j = 0}^{n-1} \P(Y_t = j)^{k_j(a)} 
			= \prod_{j = 0}^{n-1} \left( \frac{b_j t^j}{\psi(t)} \right)^{k_j(a)}.
		\end{align*}
		Recalling that for a tree \( a \in \mathcal{G}_n \), the following relations hold:
		\begin{align*}
			k_0(a) + k_1(a) + k_2(a) + \dots + k_{n-1}(a) &= n\,, \\
			k_1(a) + 2k_2(a) + \dots + (n-1)k_{n-1}(a) &= n-1\,,
		\end{align*}
		these equations express that the tree has \( n \) nodes and \( n-1 \) edges (or, equivalently, $n-1$ nodes that descend from a parent node, we don't count the root), we obtain that
		\begin{align*}
			\P(T_t = a) &= \prod_{j = 0}^{n-1} b_j^{k_j(a)} \frac{t^{n-1}}{\psi(t)^n} 
			= \frac{\omega(a)t^{n-1}}{\psi(t)^n}.
		\end{align*}
	\end{proof}
	
	\subsection{Probability generating functions} In this section, we study and prove various relations among the probability generating functions of certain variables associated with \( T_t \) and the Khinchin family of $g$, the solution to Lagrange's equation with data \( \psi \).
	\smallskip

	We denote by $|T_t|$ the total progeny of the random tree $T_t$, that is, the number of nodes in the random tree $T_t$. We have the following proposition. 
	\begin{propo}\label{propo: total_progeny}For each $t \in (0,R_{\psi})$ we have 
		\begin{align*}
			\P(|T_t| = n) = \P(T_t \in \mathcal{G}_n) = \frac{A_nt^{n-1}}{\psi(t)^n}\,, \quad \text{ for any } n \geq 1\,.
		\end{align*}
		Moreover, for \(\psi\in\mathcal K^\star\), we have \(Z_{t/\psi(t)}\stackrel{d}{=}|T_t|\), for all \(t\in[0,\tau]\) and for $\psi \in \K \setminus \K^{\star}$ we have \(Z_{t/\psi(t)}\stackrel{d}{=}|T_t|\), for all $t \in (0,R_{\psi})$.
	\end{propo}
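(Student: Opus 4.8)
The plan is to establish the mass-function identity first and then deduce the distributional equality by the reparametrization $s=t/\psi(t)$. For the first display I would observe that the two events coincide by definition: the total progeny $|T_t|$ equals $n$ precisely when the realized tree has $n$ nodes, i.e.\ when $T_t\in\mathcal G_n$, so that $\P(|T_t|=n)=\P(T_t\in\mathcal G_n)$. To evaluate this probability I would sum the single-tree formula of Lemma~\ref{lemma: weight_prob} over all $a\in\mathcal G_n$, factoring out the common term $t^{n-1}/\psi(t)^n$:
\[
\P(T_t\in\mathcal G_n)=\sum_{a\in\mathcal G_n}\frac{\omega(a)\,t^{n-1}}{\psi(t)^n}=\frac{t^{n-1}}{\psi(t)^n}\sum_{a\in\mathcal G_n}\omega(a).
\]
Proposition~\ref{propo: weights} identifies the remaining sum as $A_n$, yielding $\P(|T_t|=n)=A_nt^{n-1}/\psi(t)^n$. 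Note that this part holds for \emph{every} $t\in(0,R_\psi)$, with no restriction, since both Lemma~\ref{lemma: weight_prob} and Proposition~\ref{propo: weights} are unrestricted.

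For the distributional identity, recall that $(Z_s)$ is the Khinchin family of $g$, and since $g(s)=\sum_{n\ge1}A_ns^n$ with $A_0=0$ one has
\[
\P(Z_s=n)=\frac{A_ns^n}{g(s)},\qquad n\ge1.
\]
I would then substitute $s=t/\psi(t)$ and invoke the inversion relation $g(t/\psi(t))=t$ to compute
\[
\P\!\left(Z_{t/\psi(t)}=n\right)=\frac{A_n\,(t/\psi(t))^n}{g(t/\psi(t))}=\frac{A_n\,t^n/\psi(t)^n}{t}=\frac{A_nt^{n-1}}{\psi(t)^n},
\]
which coincides with $\P(|T_t|=n)$ from the first part. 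Since the two variables share the same mass function on $\{1,2,\dots\}$, this establishes $Z_{t/\psi(t)}\overset d=|T_t|$.

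The only delicate point—and the place where the two cases diverge—is the range on which the inversion relation $g(t/\psi(t))=t$ is valid, equivalently the range on which $s=t/\psi(t)$ stays inside the parameter domain of $Z$. For $\psi\in\mathcal K^\star$, Lemma~\ref{lemma: monotony_t/psi(t)} shows that $t\mapsto t/\psi(t)$ increases on $[0,\tau]$ up to its maximum $\rho=\tau/\psi(\tau)$, so the substitution is a bijection $[0,\tau]\to[0,\rho]$ and $g(t/\psi(t))=t$ holds exactly on $[0,\tau]$ (using the continuous extension $g(\rho)=\tau$ at the endpoint); beyond $\tau$ the map $t/\psi(t)$ strictly decreases and the inversion fails, which is why the statement is restricted to $t\in[0,\tau]$. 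For $\psi\in\mathcal K\setminus\mathcal K^\star$, the same lemma gives that $t/\psi(t)$ is strictly increasing on all of $(0,R_\psi)$, the reparametrization maps $(0,R_\psi)$ into $(0,R_g)$, and $g(t/\psi(t))=t$ holds throughout, so the identity extends to every $t\in(0,R_\psi)$.

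I expect this bookkeeping of parameter ranges—matching the validity of the inversion relation against the monotonicity dichotomy of $t/\psi(t)$—to be the main, if modest, obstacle; once the inversion relation is in hand, the algebra of the mass functions is entirely routine.
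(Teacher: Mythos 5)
Your proposal is correct and follows essentially the same route as the paper: the first identity is obtained by summing Lemma~\ref{lemma: weight_prob} over $\mathcal G_n$ and invoking Proposition~\ref{propo: weights}, and the distributional equality by substituting $s=t/\psi(t)$ into the mass function of $(Z_s)$ and applying the inversion relation $g(t/\psi(t))=t$ on the appropriate parameter range. Your extra care in tracking where the inversion is valid (via Lemma~\ref{lemma: monotony_t/psi(t)} and the boundary value $g(\rho)=\tau$) matches, and slightly elaborates on, the paper's own justification.
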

	\begin{proof} We have that
		\begin{align*}
			\P(T_t \in \mathcal{G}_n) = \sum_{a \in \mathcal{G}_n}\P(T_t = a) = \sum_{a \in \mathcal{G}_n}\frac{\omega(a)t^{n-1}}{\psi(t)^n} = \frac{A_n t^{n-1}}{\psi(t)^n}
		\end{align*}
		In the last two equalities we use Lemma \ref{lemma: weight_prob} and Proposition \ref{propo: weights}. Recall that $A_n$ denotes the $n$-th coefficient of the solution to Lagrange's equation with data $\psi$.
		\smallskip
		
		To prove the second part of the proposition observe that for any $t \in [0,\tau]$ we have
		\begin{align*}
			\P(Z_{t/\psi(t)} = n) = \frac{A_n (t/\psi(t))^n}{g(t/\psi(t))} = \frac{A_n t^{n-1}}{\psi(t)^n}\,, \quad \text{ for any } n \geq 1\,.
		\end{align*}
		In the last equality we use that for $t \in [0,\tau]$, the function $t/\psi(t)$ is the inverse function of $g$, that is, we have the equality $g(t/\psi(t)) = t$, for any $t \in [0,\tau]$. The same holds if $\psi \in \K \setminus \K^{\star}$, but in this case, for every $t \in [0, R_{\psi})$.
	\end{proof}
	
	For each $t \in (0,R_{\psi})$, we let $\psi_t(z) = \psi(tz)/\psi(t)$ be the probability generating function of the random variable $Y_t$; we set $\psi_0\equiv1$ which is consistent with $Y_0 \equiv 0$.
	\smallskip
	
	We consider, for each $t \in [0,R_{\psi})$, Lagrange's equation with data $\psi_t$ and the corresponding  power series solution $g_t(z)$:
	\begin{align}\label{eq: Lagrange-probabilitygen}
		g_t(z) = z\psi_t(g_t(z)).
	\end{align}
	
	The case $t = 0$ is a degenerate case: $\psi_0(z) \equiv 1$ and $g_0(z) = z$. 
	
	\begin{propo}\label{lemma: g_t-probgen} For $t \in (0,R_\psi)$ we have that
		\begin{align}\label{eq:identity for gt}
			g_t(z) = \frac{g(tz/\psi(t))}{t}, \quad \quad  \mbox{for all $|z| \leq 1$}\,.
		\end{align}	
		The holomorphic function \( g_t \) is continuous on \( \partial \mathbb{D} \) and satisfies \( g_t(\mathbb{D}) \subset \mathbb{D} \). Here, \( g \) denotes the solution to Lagrange's equation with data \( \psi \).
		\smallskip
		
		The power series \(g_t(z)\) is the probability generating function of \(Z_{t/\psi(t)}\) whenever either \(\psi\in\mathcal{K}^\star\) and \(t\in[0,\tau]\), or \(\psi\in\mathcal{K}\setminus\mathcal{K}^\star\) and \(t\in(0,R_\psi)\).
	\end{propo}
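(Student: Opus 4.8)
The plan is to establish the three assertions in the order stated: first derive the identity \eqref{eq:identity for gt} from the uniqueness of the solution to Lagrange's equation, then read off the continuity and mapping property from the known boundary behaviour of $g$, and finally identify the probability generating function through Proposition \ref{propo: total_progeny}.

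For the identity, I would set $h(z):=g(tz/\psi(t))/t$ and verify that $h$ solves Lagrange's equation with data $\psi_t$, so that $h=g_t$ by uniqueness. Writing $w=tz/\psi(t)$ and using both $g(w)=w\,\psi(g(w))$ and $\psi_t(u)=\psi(tu)/\psi(t)$, one computes
\begin{align*}
z\,\psi_t(h(z)) = \frac{z}{\psi(t)}\,\psi\!\left(g(w)\right) = \frac{1}{t}\cdot\frac{tz}{\psi(t)}\,\psi(g(w)) = \frac{w\,\psi(g(w))}{t} = \frac{g(w)}{t} = h(z),
\end{align*}
which is exactly the equation for $g_t$. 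The only delicate point is the domain: for $|z|\le 1$ one has $|tz/\psi(t)|\le t/\psi(t)\le R_g$ by Lemma \ref{lemma: monotony_t/psi(t)} and Theorem \ref{thm: radius_g}, so $w$ stays in $\overline{\D(0,R_g)}$; for $|z|<1$ it lies strictly inside, and it can reach the boundary circle only when $\psi\in\mathcal K^\star$ and $t=\tau$. Since $\psi$ does not vanish on $\Omega_g$, the term $\psi(g(w))$ is meaningful wherever $w$ lies in $\D(0,R_g)$, and at the apex the equation persists on $\partial\D$ via the continuous extension of $g$ to $\overline{\D(0,\rho)}$ recorded after Theorem \ref{thm: Otter-Meir-Moon}.

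The continuity of $g_t$ on $\partial\D$ is then inherited from that continuous extension, because $z\mapsto tz/\psi(t)$ carries $\overline{\D}$ into $\overline{\D(0,R_g)}$. For the inclusion $g_t(\D)\subset\D$ I would use that $g$ has non-negative coefficients, so $|g_t(z)|\le g(t|z|/\psi(t))/t$, and evaluate at the boundary radius $g_t(1)=g(t/\psi(t))/t$. When $\psi\in\mathcal K\setminus\mathcal K^\star$, or $\psi\in\mathcal K^\star$ with $t\in[0,\tau]$, the inversion relation $g(t/\psi(t))=t$ gives $g_t(1)=1$; when $\psi\in\mathcal K^\star$ and $t>\tau$, Lemma \ref{lemma: monotony_t/psi(t)} shows that $t/\psi(t)=t'/\psi(t')$ for a unique $t'<\tau$, whence $g(t/\psi(t))=t'<t$ and $g_t(1)=t'/t<1$. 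In every case $g_t(1)\le 1$, and since $g_t$ is non-constant with non-negative coefficients, $|g_t(z)|\le g_t(|z|)<g_t(1)\le 1$ for $|z|<1$.

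Finally, for the generating function I would appeal to Proposition \ref{propo: total_progeny}, which in the stated ranges gives $\P(Z_{t/\psi(t)}=n)=A_n t^{n-1}/\psi(t)^n$ (and $\P(Z_{t/\psi(t)}=0)=0$ since $A_0=0$). Summing this mass function against $z^n$ yields
\begin{align*}
\sum_{n\ge0}\P(Z_{t/\psi(t)}=n)\,z^n = \frac{1}{t}\sum_{n\ge1}A_n\left(\frac{tz}{\psi(t)}\right)^n = \frac{g(tz/\psi(t))}{t} = g_t(z),
\end{align*}
by the identity just established. I expect the main obstacle to be the domain bookkeeping at the apex $t=\tau$, where $tz/\psi(t)$ reaches the boundary circle $\partial\D(0,\rho)$ and both the functional equation and the continuity must be justified through the limiting value $g(\rho)=\tau$ rather than from within the disk of convergence.
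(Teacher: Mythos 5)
Your proposal is correct and follows essentially the same route as the paper: the core step in both is to verify that $h(z)=g(tz/\psi(t))/t$ satisfies Lagrange's equation with data $\psi_t$ and to invoke uniqueness of the solution, via the identical computation $z\psi_t(h(z))=z\,\psi(g(tz/\psi(t)))/\psi(t)=g(tz/\psi(t))/t=h(z)$. The paper's printed proof stops there, leaving the boundary continuity, the inclusion $g_t(\mathbb{D})\subset\mathbb{D}$, and the identification of $g_t$ as the probability generating function of $Z_{t/\psi(t)}$ to the surrounding results (the continuous extension $g(\rho)=\tau$ from the Otter--Meir--Moon asymptotics, Lemma \ref{lemma: monotony_t/psi(t)}, and Proposition \ref{propo: total_progeny}); your careful bookkeeping of these points, including the case $t=\tau$ where $tz/\psi(t)$ reaches $\partial\mathbb{D}(0,\rho)$ and the case $t>\tau$ where $g_t(1)=t'/t<1$, correctly fills in exactly what the paper takes as already established.
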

	\begin{proof} Solutions to Lagrange's equation with a given data are unique. Let us check that both power series satisfy the same Lagrange's equation with data \( \psi_t \), and therefore, they must be the same power series.
		\smallskip
		
		The power series $g_t(z)$ verifies Lagrange's equation with data $\psi_t$ by definition. For any $t \in (0,R_{\psi})$, denote $f_t(z) =  {g(tz/\psi(t))}/{t}$. Notice that 
		\begin{align*}
			z\psi_t(f_t(z)) = z\frac{\psi(g(tz/\psi(t)))}{\psi(t)} = \frac{g(tz/\psi(t))}{t} = f_t(z)\,.
		\end{align*}
		Here we use that $g$ verifies Lagrange's equation with data $\psi$. Therefore, by the uniqueness of solutions, it follows that
		\begin{align*}
			g_t(z) = \frac{g(tz/\psi(t))}{t}\,, \quad \text{ for all } t \in (0,R_{\psi}) \text{ and } |z|\leq 1\,.
		\end{align*}
	\end{proof}

	Fix \( \psi \in \mathcal{K}^{\star} \). Observe that, for any \( t \in (0, R_{\psi}) \), the radius of convergence of the power series \( g_t(z) \) is given by
	\[
	R_{g_t} = \frac{\tau / \psi(\tau)}{t / \psi(t)} \geq 1,
	\]
	and furthermore, \( g_t(R_{g_t}) = \tau / t \).

	\subsection{Extinction probability}\label{subsec: extinction_tree}
	
	We now analyze the extinction probability function for the parametric Galton-Watson process \( T_t \) and derive explicit formulas for this probability in terms of the coefficients of the solution \( g \) to Lagrange's equation with data \( \psi \).
	\smallskip
	
	The remarkable phenomenon occurring here, well known in the literature, is the transition from a discrete probability measure to a mixture of two measures as the parameter \( t \) varies. Under the present setting, this behavior is reminiscent of a phase transition in statistical mechanics.
	\smallskip
	
	%For any $t \in (\tau,R)$ the Bienaymé-Galton-Watson measure has a discrete component and a non-discrete component. For instance, if we compute the probability that \( T_t \) lies in the ball centered at \( a \) with radius \( r = 2^{-k} \) in \( \mathbb{T} \) (the trees in this ball coincide with $a$ at least until generation $k$), we obtain that
	%\begin{align*}
	%\P(T_t \in B(a,2^{-k})) = \P(T_t \in B(a,2^{-k}) | T_t \in \mathcal{G})q(t)+\P(T_t \in B(a,2^{-k}) | T_t \in \mathcal{G}_{\infty})(1-q(t))\,.
	%\end{align*}
	%This measure is a mixture of a discrete and a non-discrete component ($\mathcal{G}_{\infty}$ is uncountable) and is defined on the Borel \(\sigma\)-algebra of the Polish space \( \mathbb{T} \).
	%\smallskip
	
	For each $t \in (0,R_{\psi})$ denote $q(t)$ the probability of extinction of the parametric Galton-Watson process $T_t$. This probability is given by
	\begin{align*}
		q(t) = \P(T_t \in \mathcal{G})\,, \quad \text{ for any } t \in (0,R_{\psi})\,.
	\end{align*}
	
	We now provide an explicit expression for this probability in terms of the coefficients $A_n$ of the solution to Lagrange’s equation with data \( \psi \).
	\begin{theo}\label{thm:extinction_tree}
		The extinction probability of the Bienaymé-Galton-Watson process with offspring distribution \( Y_t \) satisfies the following explicit formula:
		\begin{equation}
			q(t) = \sum_{n=1}^{\infty} \frac{A_n t^{n-1}}{\psi(t)^n} = \frac{g(t/\psi(t))}{t}, \quad \text{ for any } t \in (0,R_{\psi})\,,
		\end{equation}
		where \( A_n \) are the coefficients of the solution to Lagrange’s equation with data \( \psi \).
	\end{theo}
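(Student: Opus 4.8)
The plan is to read off the extinction probability directly from the distribution of the total progeny, which has already been computed. Since the Galton--Watson process $T_t$ becomes extinct precisely when its genealogical tree is finite, extinction is the event $\{T_t \in \mathcal{G}\}$, and the definition $q(t) = \P(T_t \in \mathcal{G})$ is exactly what must be evaluated. Writing $\mathcal{G} = \bigsqcup_{n \geq 1}\mathcal{G}_n$ as a countable disjoint union according to the number of nodes, countable additivity yields
\[
q(t) = \P(T_t \in \mathcal{G}) = \sum_{n=1}^{\infty}\P(T_t \in \mathcal{G}_n).
\]
First I would substitute the mass function supplied by Proposition~\ref{propo: total_progeny}, namely $\P(T_t \in \mathcal{G}_n) = A_n t^{n-1}/\psi(t)^n$, to reach the first displayed identity of the theorem. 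This step is only bookkeeping: the passage from combinatorial weights to the coefficients $A_n$ was already carried out in Lemma~\ref{lemma: weight_prob} and Proposition~\ref{propo: weights}.

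The second identity is then an algebraic rearrangement. Factoring $1/t$ out of the sum and rewriting each summand as $A_n (t/\psi(t))^n / t$, I would recognize the remaining series as the power series $g$ evaluated at the point $z = t/\psi(t)$:
\[
\sum_{n=1}^{\infty}\frac{A_n t^{n-1}}{\psi(t)^n} = \frac{1}{t}\sum_{n=1}^{\infty} A_n\left(\frac{t}{\psi(t)}\right)^{n} = \frac{g(t/\psi(t))}{t},
\]
where I use $A_0 = 0$ so that the tail series from $n=1$ coincides with $g(z) = \sum_{n \geq 0}A_n z^n$. As a consistency check with the classical picture, applying Lagrange's equation $g(z) = z\psi(g(z))$ at $z = t/\psi(t)$ shows at once that $q(t) = g(t/\psi(t))/t$ solves the fixed-point equation $q = \psi(tq)/\psi(t) = \psi_t(q)$, matching the minimal-fixed-point characterization of the extinction probability.

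The point demanding genuine care---and the main obstacle---is to certify that the series converges and truly represents $g(t/\psi(t))$ for every admissible $t$. By Theorem~\ref{thm: radius_g} the radius of convergence of $g$ equals $R_g = \sup_{t}\, t/\psi(t)$, so the argument always satisfies $t/\psi(t) \leq R_g$. When $\psi \in \mathcal{K}\setminus\mathcal{K}^\star$ the map $t \mapsto t/\psi(t)$ is strictly increasing by Lemma~\ref{lemma: monotony_t/psi(t)}, so $t/\psi(t) < R_g$ for each fixed $t$ and the argument lies strictly inside the disk of convergence. When $\psi \in \mathcal{K}^\star$, the same lemma shows the maximum $\rho = \tau/\psi(\tau) = R_g$ is attained only at $t = \tau$; for all other parameters the argument is again interior. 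The one borderline case is $t = \tau$, where the argument reaches the boundary circle: here the Otter--Meir--Moon estimate (Theorem~\ref{thm: Otter-Meir-Moon}) gives $A_n \rho^n \sim c\,n^{-3/2}$, which is summable, so $g$ extends continuously to $\overline{\mathbb{D}(0,\rho)}$ with $g(\rho) = \tau$ and the identity persists up to and including $t = \tau$. Collecting these cases covers all $t \in (0, R_\psi)$ and finishes the argument.
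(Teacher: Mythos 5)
Your proposal is correct and follows essentially the same route as the paper: decompose the extinction event as the countable disjoint union $\mathcal{G}=\bigcup_{n\geq 1}\mathcal{G}_n$ and substitute the total-progeny mass function from Proposition~\ref{propo: total_progeny}, then recognize the resulting series as $g(t/\psi(t))/t$. Your explicit verification of convergence at the boundary case $t=\tau$ via the Otter--Meir--Moon estimate is a welcome bit of extra care --- the paper's proof only writes out the first equality and delegates the continuous extension of $g$ to $\overline{\mathbb{D}(0,\rho)}$ with $g(\rho)=\tau$ to the remarks following Theorem~\ref{thm: Otter-Meir-Moon} --- but it is the same argument, not a different one.
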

	\begin{proof} Notice that, for any $t \in (0,R_{\psi})$, we have 
		\begin{align*}
			\P(T_t \in \mathcal{G}) = \sum_{n = 1}^{\infty}\P(|T_t| = n) = \sum_{n = 1}^{\infty}\frac{A_nt^{n-1}}{\psi(t)^n}\,, \quad \text{ for any } n \geq 1\,.
		\end{align*}
		Here we apply Proposition~\ref{propo: total_progeny}, together with the fact that the family \( \{\mathcal{G}_n\}_{n \geq 1} \) forms a countable partition of \( \mathcal{G} \).
	\end{proof}
	
	This formula provides an explicit characterization of extinction probabilities in terms of the coefficients of the solution to Lagrange's equation with data $\psi$.
	\smallskip
	
	An alternative approach to the previous theorem is the following: observe that, for $\psi_t(z) = \psi(tz)/\psi(t)$, we have Lagrange's equation
	\begin{align*}
		g_t(z) = z\psi_t(g_t(z))
	\end{align*}
	and therefore $g_t(1) = \psi_t(g_t(1))$. Because the probability of extinction is the smallest fixed point of the probability generating function $\psi_t(z)$, then $q(t) = g_t(1)$. Observe that since $\psi_t(z)$ is a probability generating function, it satisfies $\psi_t(1)=1$.
	\smallskip
	
	In the previous theorem, the most interesting behavior occurs when \(\psi\in\mathcal K^\star\), since in that case a \emph{phase transition} takes place at \(t=\tau\).
	\smallskip
	
	We now study the properties of the extinction probability \( q(t) \) as a function of $t \in [0,R_{\psi})$. Our first result states that $q(t)$ is a continuous function. 
	\begin{propo} The probability of extinction \( q(t) \) is continuous on the interval \( [0, R_{\psi}) \).
	\end{propo}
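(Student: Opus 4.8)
The plan is to run everything off the closed form $q(t)=g(t/\psi(t))/t$ established in Theorem~\ref{thm:extinction_tree}. Since $\psi\in\mathcal K$ satisfies $\psi(t)>0$ on $[0,R_\psi)$, the inner map $t\mapsto t/\psi(t)$ is continuous there, and the outer division by $t$ is harmless away from the origin; so on $(0,R_\psi)$ the whole question reduces to the continuity of $g$ at every value actually taken by $t/\psi(t)$. I would then handle the endpoint $t=0$ by a direct limit computation, and split the interior analysis according to whether $\psi\in\mathcal K^\star$ or $\psi\in\mathcal K\setminus\mathcal K^\star$.

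First I would dispose of the interior. If $\psi\in\mathcal K\setminus\mathcal K^\star$, then by Lemma~\ref{lemma: monotony_t/psi(t)} the function $t/\psi(t)$ is strictly increasing with supremum $R_g$ (Theorem~\ref{thm: radius_g}), hence $t/\psi(t)<R_g$ for every $t\in(0,R_\psi)$; thus $g$ is always evaluated strictly inside its disk of convergence, where it is holomorphic and therefore continuous, and $q$ is continuous as a composition of continuous maps. If $\psi\in\mathcal K^\star$, the same reasoning works at every $t\ne\tau$, since there $t/\psi(t)<\rho=R_g$. The genuinely delicate point is $t=\tau$, where $t/\psi(t)$ attains its maximum $\rho$, i.e.\ where the argument lands exactly on the boundary circle of convergence of $g$. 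Here I would invoke the Otter--Meir--Moon asymptotics (Theorem~\ref{thm: Otter-Meir-Moon}): the estimate $A_n\rho^{n}\sim c\,n^{-3/2}$ is summable, so $g$ extends continuously to $\overline{\mathbb D(0,\rho)}$ with $g(\rho)=\tau$. Because $t/\psi(t)\le\rho$ with equality only at $\tau$, the argument approaches $\rho$ from below as $t\to\tau$ from either side, and continuity of $g$ at $\rho$ gives $g(t/\psi(t))\to\tau$, whence $q(\tau)=\tau/\tau=1$ and $q$ is continuous at $\tau$.

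It remains to check continuity at $t=0$. Since $Y_0\equiv 0$, the process dies immediately, so $q(0)=1$. For the limit I would rewrite $q(t)=\frac{1}{\psi(t)}\cdot\frac{g(u)}{u}$ with $u=t/\psi(t)$; since $g(u)/u\to A_1=b_0=\psi(0)$ as $u\downarrow 0$ (because $g(u)=A_1u+O(u^2)$ with $A_1=\textsc{coeff}_0[\psi]=b_0$ by Lagrange's inversion) while $1/\psi(t)\to 1/\psi(0)$, the product tends to $1$. Hence $q(0^+)=1=q(0)$, and $q$ is continuous at the origin.

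The main obstacle is precisely the boundary point $t=\tau$ in the case $\psi\in\mathcal K^\star$: there the inner argument $t/\psi(t)$ reaches the radius of convergence $\rho$ of $g$, so mere holomorphy of $g$ in the open disk is not enough and one must control $g$ up to the boundary. This is exactly what the summability of $A_n\rho^n$ furnished by Otter--Meir--Moon provides, and it is the single ingredient beyond elementary composition of continuous maps that the argument really needs.
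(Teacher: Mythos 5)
Your proposal is correct and follows essentially the same route as the paper: the closed form $q(t)=g(t/\psi(t))/t$ from Theorem~\ref{thm:extinction_tree}, continuity of $g$ on $\overline{\mathbb{D}(0,\rho)}$ supplied by the Otter--Meir--Moon asymptotics to handle $t=\tau$, and the expansion $g(u)=A_1u+o(u)$ with $A_1=\psi(0)$ via Lagrange inversion at the origin. The only (harmless) deviation is in the case $\psi\in\mathcal K\setminus\mathcal K^\star$, where the paper simply observes $q\equiv 1$ while you argue through the formula and the fact that $t/\psi(t)$ stays strictly below $R_g$; both work.
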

	\begin{proof} If $\psi \in \K \setminus \K^{\star}$, we have $q(t) \equiv 1$, for any $t \in [0,R_{\psi})$, which is continuous. Assume that \( \psi \in \K^{\star} \), the function \( t/\psi(t) \) is continuous for any \( t \in [0, R_{\psi}) \). Now, combining Theorem \ref{thm: radius_g} with Lemma \ref{lemma: monotony_t/psi(t)}, we find that
		\[
		\frac{t}{\psi(t)} \leq \frac{\tau}{\psi(\tau)} = \rho = R_g, \quad \text{for all } t \in (0, R_{\psi}),
		\]
		and therefore, since the solution \( g \) to Lagrange's equation is continuous on the closure of the disk \( \mathbb{D}(0, \rho) \), we conclude that
		\[
		q(t) = {g(t/\psi(t))}/{t}
		\]
		is continuous for all \( t \in (0, R_{\psi}) \).
		\smallskip
		
		For \( t = 0 \), observe that
		\[
		g(t/\psi(t)) = A_1 \frac{t}{\psi(t)} + o(t) = \psi(0) \frac{t}{\psi(t)} + o(t), \quad \text{as } t \downarrow 0,
		\]
		and therefore we conclude that \( q(0) = 1 \), as expected. Here we apply Lagrange’s inversion formula to obtain \( A_1 = \textsc{coeff}_0[\psi(z)] =  \psi(0)\).
	\end{proof}
	\smallskip
	
	We now analyze the asymptotic behaviour of $q(t)$, as $t \uparrow R_{\psi}$.
	
	\begin{propo} If $\lim_{t \uparrow R_{\psi}}\psi(t)/t = +\infty$, then 
		\begin{align*}
			q(t) \sim \frac{\psi(0)}{\psi(t)} = \P(Y_t = 0)\,, \quad \text{ as } t \uparrow R_{\psi}\,.
		\end{align*}
	\end{propo}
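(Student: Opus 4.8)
The plan is to start from the closed formula $q(t)=g(t/\psi(t))/t$ established in Theorem~\ref{thm:extinction_tree} and to track the behaviour of the inner argument $u(t):=t/\psi(t)$. The hypothesis $\lim_{t\uparrow R_\psi}\psi(t)/t=+\infty$ is precisely the statement that $u(t)\to 0$ as $t\uparrow R_\psi$. Since $g$ has positive radius of convergence $R_g=\rho>0$ and $g(0)=0$, once $t$ is close enough to $R_\psi$ the value $u(t)$ lies well inside the disk of convergence of $g$, so the formula for $q(t)$ applies and we may freely use the Taylor expansion $g(z)=A_1z+A_2z^2+\cdots$ near $z=0$.

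The decisive step is to isolate the $t$-dependence into a single elementary factor. The key algebraic identity is $u(t)/t=1/\psi(t)$, which lets us rewrite
\[
q(t)=\frac{g(u(t))}{t}=\frac{g(u(t))}{u(t)}\cdot\frac{u(t)}{t}=\frac{g(u(t))}{u(t)}\cdot\frac{1}{\psi(t)},
\]
so that $q(t)\,\psi(t)=g(u(t))/u(t)$. Next I would invoke the analyticity of $g$ at the origin: since $g(0)=0$ and $g'(0)=A_1$, we have $g(z)/z\to A_1$ as $z\to 0$, and by Lagrange's inversion formula $A_1=\textsc{coeff}_0[\psi(z)]=\psi(0)$ (this is the same value used in the continuity proposition). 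Because $u(t)\to 0$, it follows that $g(u(t))/u(t)\to\psi(0)$, hence $q(t)\,\psi(t)\to\psi(0)$, which is exactly the asserted asymptotic $q(t)\sim\psi(0)/\psi(t)$.

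I do not anticipate any genuine obstacle here. The only point deserving a word of care is checking that the argument $u(t)$ eventually enters and remains in the region where the power series of $g$ converges, but this is immediate from $u(t)\to 0$ together with $R_g=\rho>0$. Indeed, once the factorization through $u(t)/t=1/\psi(t)$ is noticed, the whole statement reduces to the elementary limit $g(z)/z\to g'(0)$ as $z\to 0$, so the substance of the proof lies entirely in that single observation together with the identification $A_1=\psi(0)$.
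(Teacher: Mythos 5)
Your proof is correct, and it takes a slightly different route than the paper's. Both arguments start from Theorem~\ref{thm:extinction_tree}, $q(t)=g(t/\psi(t))/t$, and both reduce the claim to showing $q(t)\,\psi(t)\to\psi(0)$; but the mechanisms differ in the key step. The paper first shows $tq(t)=g(t/\psi(t))\to g(0)=0$ (invoking continuity of $g$ on the closed disk $\overline{\D(0,\rho)}$), then feeds this into the fixed-point identity $\psi(tq(t))=q(t)\,\psi(t)$ and uses continuity of $\psi$ at the origin. You instead stay entirely on the $g$ side: factoring $q(t)\psi(t)=g(u(t))/u(t)$ with $u(t)=t/\psi(t)\to0$ and using the elementary limit $g(z)/z\to g'(0)=A_1$, with $A_1=\psi(0)$ supplied by Lagrange inversion. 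The two are linked by Lagrange's equation itself, since $g(u)/u=\psi(g(u))=\psi(tq(t))$, so in substance you are computing the same limit by a different expansion. What your version buys is economy of hypotheses: you need only analyticity of $g$ at the origin (where $u(t)$ eventually lands, as you correctly note), not the boundary continuity of $g$ on $\overline{\D(0,\rho)}$, which in the paper rests on the Otter--Meir--Moon asymptotics and is heavier machinery than the statement requires; the paper's version, in exchange, makes the probabilistic content visible, as $\psi(tq(t))=q(t)\psi(t)$ is exactly the fixed-point equation for the extinction probability of the offspring law $Y_t$. One small remark: you could even skip the "eventually inside the disk" caveat, since Lemma~\ref{lemma: monotony_t/psi(t)} and Theorem~\ref{thm: radius_g} give $u(t)\le\rho=R_g$ for all $t$, and your hypothesis forces $\psi\in\mathcal K^\star$ anyway (if $m_\psi\le1$ throughout, $t/\psi(t)$ is increasing and cannot tend to $0$); but your softer argument via $u(t)\to0$ is perfectly adequate.
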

	\begin{proof} Theorem~\ref{thm:extinction_tree} gives that
		\begin{align*}
			q(t) = \frac{g(t/\psi(t))}{t}, \quad \text{for any } t \in (0, R_{\psi}),
		\end{align*}
		therefore, using the fact that \( g(z) \) is continuous on the closure of \( \mathbb{D}(0, \rho) \), together with the limit \( \lim_{t \uparrow R_{\psi}} t/\psi(t) = 0 \), we find
		\begin{align}\label{eq: tq_cero}
			\lim_{t \uparrow R_{\psi}} t q(t) = \lim_{t \uparrow R_{\psi}} g({t}/{\psi(t)}) = g(0) = 0.
		\end{align}
		
		Now, using that \( q(t) \) is a fixed point of \( \psi_t(z) = \psi(tz)/\psi(t) \), we obtain
		\begin{align*}
			\psi(t q(t)) = q(t) \psi(t),
		\end{align*}
		and hence, using equation \eqref{eq: tq_cero}, we conclude that
		\begin{align*}
			q(t) \sim \frac{\psi(0)}{\psi(t)}, \quad \text{as } t \uparrow R_{\psi}.
		\end{align*}
			Observe that $\psi(0)/\psi(t) = \P(Y_t = 0)$.
	\end{proof}
	
	\begin{remark}
		The cases where the ratio \(\psi(t)/t\) remains bounded, as $t \uparrow R_{\psi}$, are the following:
		\begin{itemize}
			\item If \(R_{\psi} = \infty\), then \(\psi(t)\) is a polynomial of degree 1. In this case \(M_{\psi} = 1\) and \(q(t)\equiv1\) for all \(t\geq 0\).
			\smallskip
			\item If \(R_{\psi}<\infty\), then \(\lim_{t\uparrow R_{\psi}}\psi(t)<\infty\) and the probability of extinction \(q(t)\) can be extended to $t = R_{\psi}$.
	        \smallskip
%	        
%	        If $\psi \in \K^{\star}$ we have that $q(t)$ can be extended to $t = R_{\psi}$. If $\psi \in \K \setminus \K^{\star}$ we distinguish two cases: $M_{\psi} = 1$ and $M_{\psi}<1$.
%	        \smallskip
%	        
%	        In the first case, $M_{\psi} = 1$, then using Theorem 9.1 of \cite{K_dos}, we can extend $q(t)$ to $t = R_{\psi}$. Theorem 9.1 of \cite{K_dos} gives that $g$ is continuous on the closure of $\D(0,R/\psi(R))$.
%	        \smallskip
%	        
%	        For the second case, $M_{\psi}<1$, we use the result in \cite{K_dos}[p. 35] (equation (9.6)) to obtain that $g$ is continuous on the closure of $\D(0,R/\psi(R))$ and therefore $q(t)$ can be extended to $t = R_{\psi}$.
		\end{itemize}\finremark
	\end{remark}
	
	Now we give an upper bound for the extinction probability. 
	\begin{lem}\label{lemma: bound_q} Fix $\psi \in \K^{\star}$. For any \(t\in(0,R_\psi)\), the extinction probability satisfies
		\[
		q(t)\;\le\;{\tau}/{t}.
		\]
	\end{lem}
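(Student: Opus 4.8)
The plan is to read off the bound directly from the closed formula for $q(t)$ established in Theorem~\ref{thm:extinction_tree}, namely
\[
q(t) = \frac{g(t/\psi(t))}{t}, \quad \text{for any } t \in (0,R_{\psi}),
\]
and then exploit two monotonicity facts already at our disposal. First, since $\psi \in \K^{\star}$, Lemma~\ref{lemma: monotony_t/psi(t)} tells us that the argument $t/\psi(t)$ attains its maximum at the apex $t = \tau$, so that
\[
\frac{t}{\psi(t)} \le \frac{\tau}{\psi(\tau)} = \rho, \quad \text{for all } t \in (0,R_{\psi}),
\]
where the value $\rho = \tau/\psi(\tau) = R_g$ is exactly the radius of convergence of $g$ by Theorem~\ref{thm: radius_g}. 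Thus the argument of $g$ never exits the interval $[0,\rho]$ on which $g$ is controlled.

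Second, I would invoke the fact, recorded just after Theorem~\ref{thm: Otter-Meir-Moon}, that for $\psi \in \K^{\star}$ the solution $g$ extends continuously to the closure of $\mathbb{D}(0,\rho)$ with the boundary value $g(\rho) = \tau$. Since $g$ has non-negative Taylor coefficients, it is increasing on $[0,\rho]$, and therefore the estimate $t/\psi(t) \le \rho$ propagates through $g$ to give
\[
g\!\left(\frac{t}{\psi(t)}\right) \le g(\rho) = \tau, \quad \text{for all } t \in (0,R_{\psi}).
\]
Dividing by $t$ and using the formula for $q(t)$ then yields $q(t) \le \tau/t$, which is exactly the claim.

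The argument is short and essentially mechanical once the two ingredients are in place; there is no genuine analytic obstacle. The only point requiring a little care is the appeal to the boundary value $g(\rho) = \tau$: this relies on the continuous extension of $g$ to $\partial \mathbb{D}(0,\rho)$, which in turn rests on the Otter–Meir–Moon asymptotics $A_n \rho^n \sim C\, n^{-3/2}$ guaranteeing summability of the boundary series. Since this extension has already been justified in the text, I would simply cite it rather than reprove it, and present the three displayed inequalities above in sequence as the complete proof.
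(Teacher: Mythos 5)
Your proof is correct and is essentially the paper's argument in unrescaled form: the paper writes $q(t)=g_t(1)\le g_t(R_{g_t})=\tau/t$ using the normalized generating function $g_t(z)=g(tz/\psi(t))/t$ with $R_{g_t}\ge 1$, which under that substitution is exactly your chain $g(t/\psi(t))\le g(\rho)=\tau$. Both versions rest on the same two ingredients you identify — monotonicity from non-negative coefficients and the boundary value $g(\rho)=\tau$ supplied by the Otter--Meir--Moon asymptotics.
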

	\begin{proof}
		Recall that $g_t(R_{g_t}) = \tau/t$. Since $g_t(z)$ has non negative Taylor coefficients, and $R_{g_t} \geq 1$ for any $t \in (0,R_{\psi})$, we find that 
		$$q(t) = g_t(1) \leq g_t(R_{g_t}) = \tau/t, \quad \text{ for any } t \in (0,R_{\psi})\,,$$ 
		which yields the desired upper bound on the extinction probability. Note that this estimate is meaningful only when $t > \tau$, otherwise $q(t) \equiv 1$. 
	\end{proof}

	The function $q(t)$ is differentiable for all $t\neq\tau$. The following proposition precisely describes the discontinuity in its derivative at that point. 
	\begin{theo}
		Fix \(\psi\in\mathcal K^\star\). The extinction probability \(q(t)\) is differentiable on the interval \((0,R_\psi)\setminus\{\tau\}\), with a jump discontinuity of its derivative at \(t=\tau\). Concretely,
		\[
		\lim_{t\downarrow\tau}\frac{q(t)-1}{t-\tau}=-\frac{2}{\tau},
		\qquad
		\lim_{t\uparrow\tau}\frac{q(t)-1}{t-\tau}=0.
		\]
		Moreover, for all \(t\in(0,R_\psi)\setminus \{\tau\}\), one has the closed‐form for the derivative
		\[
		q'(t)
		=\frac{q(t)}{t}\left(\frac{1-m_\psi(t)}{1-m_\psi(tq(t))}-1\right).
		\]
	\end{theo}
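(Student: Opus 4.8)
The plan is to handle the two sides of $\tau$ separately: obtain the closed form away from $\tau$ by implicitly differentiating the fixed-point relation, and then analyse the right-hand derivative at $\tau$ via a second-order expansion of an auxiliary function.

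First I would dispose of $t\in(0,\tau)$, where $g(t/\psi(t))=t$ forces $q(t)\equiv1$, so $q$ is differentiable with $q'(t)=0$; since $m_{\psi}(tq(t))=m_{\psi}(t)$ there, the bracket $\frac{1-m_{\psi}(t)}{1-m_{\psi}(tq(t))}-1$ vanishes and the claimed formula holds trivially. For $t\in(\tau,R_\psi)$, Lemma~\ref{lemma: monotony_t/psi(t)} gives $t/\psi(t)<\tau/\psi(\tau)=\rho=R_g$, so $t/\psi(t)$ lies in the open disk of convergence of $g$; as $g$ is analytic there and $t\mapsto t/\psi(t)$ is real-analytic, $q(t)=g(t/\psi(t))/t$ (from Theorem~\ref{thm:extinction_tree}) is differentiable on $(\tau,R_\psi)$. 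To extract the closed form I would differentiate the fixed-point identity $\psi(tq(t))=q(t)\psi(t)$ implicitly, substitute $\psi'(s)=m_{\psi}(s)\psi(s)/s$ from \eqref{eq:m and sigma in terms of f} together with $\psi(tq)=q\psi(t)$ to eliminate $\psi'(tq)$ and $\psi'(t)$, and solve for $q'(t)$. Collecting terms yields
\[
q'(t)=\frac{q(t)}{t}\,\frac{m_{\psi}(tq(t))-m_{\psi}(t)}{1-m_{\psi}(tq(t))},
\]
which is the stated expression after rewriting $\frac{1-m_{\psi}(t)}{1-m_{\psi}(tq)}-1$.

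To justify that the denominator does not vanish I need $m_{\psi}(tq(t))\neq1$, i.e. $tq(t)<\tau$, for $t>\tau$. I would invoke the standard extinction property that in the supercritical regime ($m_{\psi}(t)>1$) the smallest fixed point $q(t)$ of $\psi_t$ satisfies $\psi_t'(q(t))<1$; computing $\psi_t'(z)=t\psi'(tz)/\psi(t)$ and using $\psi(tq)=q\psi(t)$ gives $\psi_t'(q(t))=m_{\psi}(tq(t))$, so $m_{\psi}(tq(t))<1$, and since $m_{\psi}$ is increasing with $m_{\psi}(\tau)=1$ this forces $tq(t)<\tau$. This keeps $1-m_{\psi}(tq(t))>0$ and, for $t>\tau$, makes $q'(t)<0$, consistent with $q$ decreasing. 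The left-hand limit at $\tau$ is then immediate, since $q\equiv1$ on $(0,\tau)$ gives $\lim_{t\uparrow\tau}(q(t)-1)/(t-\tau)=0$.

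The crux is the right-hand limit, and here the key device is to divide the fixed-point relation by $tq(t)$: $\psi(tq)=q\psi(t)$ is equivalent to $\Phi(tq(t))=\Phi(t)$, where $\Phi(s):=\psi(s)/s$. A short computation gives $\Phi'(s)=\bigl(s\psi'(s)-\psi(s)\bigr)/s^2$, whence $\Phi'(\tau)=0$ (as $m_{\psi}(\tau)=1$) and $\Phi''(\tau)=\psi''(\tau)/\tau>0$ by the defining property of $\mathcal{K}^\star$; thus $\Phi$ has a non-degenerate minimum at $\tau$. Writing $t=\tau+\epsilon$ and $tq(t)=\tau-\eta$ with $\epsilon,\eta\downarrow0$ (using continuity of $q$, $q(\tau)=1$, and $tq(t)<\tau$), the relation $\Phi(\tau-\eta)=\Phi(\tau+\epsilon)$ and the second-order Taylor expansion of $\Phi$ at $\tau$ give $\eta^2=\epsilon^2(1+o(1))$, so $\eta=\epsilon+o(\epsilon)$, i.e. $tq(t)=2\tau-t+o(t-\tau)$. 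Substituting,
\[
\frac{q(t)-1}{t-\tau}=\frac{tq(t)-t}{t\,(t-\tau)}=\frac{2(\tau-t)+o(t-\tau)}{t\,(t-\tau)}\xrightarrow[t\downarrow\tau]{}-\frac{2}{\tau}.
\]
I expect this reflection step—deducing the symmetry $\tau-tq(t)\sim t-\tau$ from $\Phi(tq)=\Phi(t)$—to be the main obstacle, as it rests on the non-degeneracy $\Phi''(\tau)>0$ and on checking that the cubic and higher terms perturb $\eta-\epsilon$ only at order $o(\epsilon)$; equivalently, one may compute $\lim_{t\downarrow\tau}q'(t)=-2/\tau$ directly from the closed form and conclude via the mean value theorem, which reduces to the very same expansion.
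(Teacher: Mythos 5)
Your proposal is correct and follows essentially the same route as the paper's proof: differentiability away from $\tau$ via analyticity of $g$ on $\mathbb{D}(0,\rho)$, the closed form extracted from the fixed-point identity $\psi(tq(t))=q(t)\psi(t)$, and, crucially, the same key device of the invariance of $h(t)=\psi(t)/t$ under $t\mapsto tq(t)$ with a second-order Taylor expansion at $\tau$ (where $h'(\tau)=0$, $h''(\tau)>0$) to get $\tau-tq(t)\sim t-\tau$ and hence the limit $-2/\tau$. The only cosmetic differences are that you obtain $q'$ by implicit differentiation rather than via the formula for $g'$ from Lagrange's equation, and you justify $tq(t)<\tau$ by the convexity of $\psi_t$ instead of citing Lemma~\ref{lemma: bound_q}; both variants are sound.
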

	\begin{proof} Observe that \(t = \tau\) is the unique point at which \(q(t)\) may fail to be differentiable. By Lemma~\ref{lemma: monotony_t/psi(t)}, the function \(t/\psi(t)\) is strictly decreasing on \((\tau,R_{\psi})\). Hence for all \(t>\tau\),
		\[
		\frac{t}{\psi(t)} < \frac{\tau}{\psi(\tau)},
		\]
		and since \(g\) is analytic (and thus differentiable) on \(\mathbb{D}(0,\tau/\psi(\tau))\), it follows that 
		\[
		q(t) = {g(t/\psi(t))}/{t}
		\]
		is differentiable for every \(t>\tau\). For $t < \tau$ we have $q(t) \equiv 1$, then $q(t)$ is differentiable. Let's prove that $q(t)$ is not differentiable at $t = \tau$. For the left-limit we have
		\begin{align*}
			\lim_{t \uparrow \tau} \frac{q(t) - 1}{t - \tau} = 0.
		\end{align*}
		This conclusion follows directly from the fact that \(q(t)\equiv1\) for all \(t\in[0,\tau]\). Now we compute the derivative of $q(t)$, for $t \in (\tau,R_{\psi})$. Using that $g(z) = z\psi(g(z))$ we find  
		\begin{align}\label{eq: derivative_lagrange}
			g^{\prime}(z) = \frac{\psi(g(z))}{1-z\psi^{\prime}(g(z))}\,, \quad \text{ for any } |z|<\rho\,.
		\end{align}
		For any \(t \in (0,R_{\psi}) \setminus \{\tau\}\), combining Theorem \ref{thm:extinction_tree}, equation \eqref{eq: derivative_lagrange} and the identity \(\psi(tq(t))=q(t)\psi(t)\) we obtain that
		\begin{align}\label{eq: derivada_q}
			q'(t)=\frac{q(t)}{t}{\left(\frac{1-m_{\psi}(t)}{1-m_{\psi}(tq(t))}-1\right)},
		\end{align}\\[.001 cm]
		here we use that \(q(t)\) is differentiable on \((0,R_{\psi}) \setminus \{\tau\}\). 
		\smallskip
		
		Now we compute the right derivative of $q(t)$,  at $t = \tau$. Taylor’s expansion around \(t=\tau\) gives
		\[
		1-m_{\psi}(t) = -m_{\psi}'(\tau)\,(t-\tau)\,(1+o(1))\,, 
		\quad \text{ as } \quad t\downarrow\tau\,.
		\]
	  Notice that \(m_{\psi}'(\tau)>0\), the mean $m_{\psi}$ is strictly increasing. From the previous asymptotic relation it follows that
		\begin{align}
			\label{eq: medias_q_cociente}
			\frac{1-m_{\psi}(t)}{1-m_{\psi}(tq(t))}
			\sim -\frac{(t-\tau)}{(\tau-tq(t))}\,, 
			\quad \text{ as } \quad t\downarrow\tau\,.
		\end{align}
		
		Denote $h(t) = \psi(t)/t$, for $t \in (0,R_{\psi})$. This function is invariant under $tq(t)$ meaning that $h(tq(t)) = h(t)$. Taylor's expansion around $t = \tau$ gives that 
		\begin{align*}
			h(t) = h(\tau)+(h^{\prime\prime}(\tau)/2)(t-\tau)^2 +O((t-\tau)^3)\,, \quad \text{ as } t \rightarrow \tau\,.
		\end{align*}
		From the previous expansion we obtain that
		\begin{align*}
			\lim_{t \downarrow \tau}\frac{h(tq(t))-h(\tau)}{(tq(t)-\tau)^2} = \frac{h^{\prime\prime}(\tau)}{2} = \frac{\sigma_{\psi}^2(\tau)}{2}\frac{\psi(\tau)}{\tau^3}>0
		\end{align*}
		and
		\begin{align*}
			\lim_{t \downarrow \tau}\frac{h(t)-h(\tau)}{(t-\tau)^2} = \frac{h^{\prime\prime}(\tau)}{2} = \frac{\sigma_{\psi}^2(\tau)}{2}\frac{\psi(\tau)}{\tau^3}>0\,.
		\end{align*}
		\smallskip
		
		Using that $h(tq(t)) = h(t)$ we find  
		\begin{align*}
			\lim_{t \downarrow \tau}\frac{(tq(t)-\tau)^2}{(t-\tau)^2} = 1
		\end{align*}
		and this implies that 
		\begin{align}\label{eq: limit_tq}
			\lim_{t \downarrow \tau}\frac{tq(t)-\tau}{t-\tau} = -1\,.
		\end{align}
		Recall that $tq(t) \leq \tau$, for all $t \in (0, R_\psi)$; see Lemma~\ref{lemma: bound_q}. Hence, combining equations \eqref{eq: derivada_q} and \eqref{eq: medias_q_cociente} with equation \eqref{eq: limit_tq}, we conclude that 
		\[
		\lim_{t\downarrow\tau}q'(t)
		=\lim_{t\downarrow\tau}\frac{q(t)-1}{t-\tau}
		=-\frac{2}{\tau}\,.
		\]
	\end{proof}
	
	\begin{remark}
		From the expression
		\[
		q'(t)
		=\frac{q(t)}{t}\!\left(\frac{1-m_{\psi}(t)}{1-m_{\psi}(tq(t))}-1\right), \quad \text{ for any } t \in (\tau,R_{\psi}),
		\]
		we deduce that \(q'(t)<0\) for all \(t\in(\tau,R_{\psi})\), and therefore $q(t)$ is strictly decreasing on that interval. %Indeed, since \(tq(t)<\tau\), for any $t \in (\tau,R_{\psi})$, we have the inequality \(1-m_{\psi}(tq(t))>0\), while \(1-m_{\psi}(t)<0\) on \((\tau,R_{\psi})\). Recall that $m_{\psi}$ is strictly increasing and $m_{\psi}(\tau) = 1$. This concludes the proof. 
		\finremark
	\end{remark}
	
	\smallskip
	
	\subsection{The \textit{uniform} probability}
	
	We have previously discussed the weight associated to the family of rooted plane trees. This weight  is defined as follows: for any \( a \in \mathcal{G} \), we have
	\begin{align*}
		\omega(a) = \prod_{j = 0}^{\infty}b_j^{k_j(a)}\,, 
	\end{align*}
	with the convention that \( 0^0 = 1 \). By means of $\omega$, see Proposition \ref{propo: weights} above, we can build all the families of simple varieties of trees.  
	\smallskip
	
	Now, for any $a \in \mathcal{G}$, consider the probabilistic weight  
	\begin{align*}
		\omega_{\P}(a) = \prod_{j=0}^{\infty}\P(Y_t = j)^{k_j(a)} = \frac{\omega(a)t^{|a|-1}}{\psi(t)^{|a|}} = \P(T_t = a)\,,
	\end{align*}
	again with the convention that \( 0^0 = 1 \). Assume, for simplicity, that \( A_n \neq 0 \) for any \( n \geq 1 \), otherwise, we can restrict \( n \geq 1 \) to values satisfying \( n \equiv 1 \mod Q_{\psi} \). First, we consider the measure on \( \mathcal{G} \) with weights \( \omega_{\P} \) and \( \omega \). For a rooted plane tree $a \in \mathcal{G}$ we compute its measure as follows
	\begin{align}\label{eq: Boltzmann/condition_extinction}
		\mu_{t}(\{a\}) = \frac{\omega_{\P}(a)}{\sum_{a \in \mathcal{G}}\omega_{\P}(a)} = \frac{\omega(a)t^{|a|-1}/\psi(t)^{|a|}}{\sum_{n \geq 1}A_n t^{n-1}/\psi(t)^n} = \frac{\omega(a)t^{|a|-1}/\psi(t)^{|a|}}{q(t)} = \frac{\P(T_t = a)}{q(t)}
	\end{align}
	We will show below that this measure coincides with conditioning the Galton–Watson tree on the extinction event. %Note that one cannot define a measure using the combinatorial weights $\omega(a)$ alone, since $\sum_{a\in\mathcal G}\omega(a)$ may diverge.
	\smallskip

	Now we compute the measure $\mu$ with respect to \( \omega \) and \( \omega_{\P} \), respectively, for the set of rooted plane trees with \( n \) nodes. For any $a \in \mathcal{G}_n$, we have
	\begin{align*}
		\mu_{\omega,n}(\{a\}) &=	\frac{\omega(a)}{\sum_{a \in \mathcal{G}_n}\omega(a)} =  \frac{\omega(a)}{A_n}\,, \\ 
		\mu_{\omega_{\P},n}(\{a\}) &=	\frac{\omega_{\P}(a)}{\sum_{a \in \mathcal{G}_n}\omega_{\P}(a)} = \frac{\P(T_t = a)}{A_n t^{n-1}/\psi(t)^n} 
		= \frac{\omega(a)t^{n-1}/\psi(t)^n}{A_n t^{n-1}/\psi(t)^n} = \frac{\omega(a)}{A_n}.
	\end{align*}
	Thus, the measures on \( \mathcal{G}_n \) with either the probabilistic or the combinatorial weight are exactly the same measure. This reflects the fact that conditioning the random tree \( T_t \) on the event \( |T_t| = n \) yields the \textit{uniform} probability distribution over subclasses of weighted trees. Here, "uniform" is understood in a broad sense, which we will clarify in the following.
	\smallskip
	
	Fix \( \mathcal{R} \subseteq \mathcal{G} \) as a subclass of rooted plane trees, and for any \( n \geq 1 \), define  
	\begin{align*}
		R_n = \sum_{a \in \mathcal{R} \cap \mathcal{G}_n} \omega(a)\,.
	\end{align*}
	We denote by $\mathcal{R}_n = \mathcal{R} \cap \mathcal{G}_n$. Regarding this coefficient (the previous sum of weights), we have the following result:
	\begin{theo}
		For any \( t \in (0,R_{\psi}) \), we have  
		\begin{align*}
			\P(T_t \in \mathcal{R} \mid |T_t| = n) = \frac{R_n}{A_n}, \quad \text{ for any } n \equiv 1 \mod Q_{\psi}\,,
		\end{align*}
		and this probability is independent of the parameter \( t \).
	\end{theo}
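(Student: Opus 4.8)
The plan is to compute the conditional probability directly from its definition and cancel the parameter-dependent factors. The key observation is that by Lemma~\ref{lemma: weight_prob}, the probabilistic weight of a tree $a \in \mathcal{G}_n$ factors as $\P(T_t = a) = \omega(a)\,t^{n-1}/\psi(t)^n$, so the dependence on $t$ enters \emph{only} through the factor $t^{n-1}/\psi(t)^n$, which is constant across all trees of the same size $n$. Conditioning on $|T_t| = n$ should therefore wash out this common factor entirely, leaving a purely combinatorial ratio.

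**Key steps.**
First I would write, for $n \equiv 1 \mod Q_{\psi}$ (so that $A_n \neq 0$ and the conditioning event has positive probability),
\begin{align*}
\P(T_t \in \mathcal{R} \mid |T_t| = n) = \frac{\P(T_t \in \mathcal{R}_n)}{\P(|T_t| = n)} = \frac{\sum_{a \in \mathcal{R}_n}\P(T_t = a)}{\P(T_t \in \mathcal{G}_n)}\,.
\end{align*}
Next I would substitute Lemma~\ref{lemma: weight_prob} into the numerator, obtaining $\sum_{a \in \mathcal{R}_n}\omega(a)\,t^{n-1}/\psi(t)^n = R_n\, t^{n-1}/\psi(t)^n$, and use Proposition~\ref{propo: total_progeny} for the denominator, $\P(T_t \in \mathcal{G}_n) = A_n\, t^{n-1}/\psi(t)^n$. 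Then the common factor $t^{n-1}/\psi(t)^n$ cancels, yielding
\begin{align*}
\P(T_t \in \mathcal{R} \mid |T_t| = n) = \frac{R_n\, t^{n-1}/\psi(t)^n}{A_n\, t^{n-1}/\psi(t)^n} = \frac{R_n}{A_n}\,,
\end{align*}
which is manifestly independent of $t$. This matches the earlier computation showing that $\mu_{\omega_{\P},n}$ and $\mu_{\omega,n}$ coincide on $\mathcal{G}_n$: restricting that identity to the subclass $\mathcal{R}_n$ and summing gives the same conclusion.

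**Main obstacle.**
There is no serious analytic difficulty here; the result is essentially a bookkeeping cancellation. The only point requiring care is the well-definedness of the conditional probability: one must ensure $\P(|T_t| = n) > 0$, which fails precisely when $A_n = 0$. This is exactly why the statement restricts to indices $n \equiv 1 \mod Q_{\psi}$, for which $A_n > 0$ by the definition of $Q_{\psi}$ together with the Otter–Meir–Moon asymptotics (Theorem~\ref{thm: Otter-Meir-Moon}) guaranteeing $A_n \neq 0$ along that arithmetic progression for large $n$. I would simply note this restriction at the outset so that every denominator is positive, and the rest is the direct cancellation above.
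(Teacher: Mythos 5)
Your proof is correct and follows essentially the same route as the paper: decompose the conditional probability as $\P(T_t \in \mathcal{R}_n)/\P(T_t \in \mathcal{G}_n)$, evaluate the numerator via Lemma~\ref{lemma: weight_prob} and the denominator via Proposition~\ref{propo: total_progeny}, and cancel the common factor $t^{n-1}/\psi(t)^n$. Your added remark on the positivity of the denominator (justifying the restriction $n \equiv 1 \bmod Q_{\psi}$) is a sensible clarification that the paper leaves implicit.
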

	\begin{proof} Fix $t \in (0,R_{\psi})$. We have
		\begin{align*}
			\P(T_t \in \mathcal{R} \mid |T_t| = n) = \frac{\P(T_t \in \mathcal{R} \text{ and } |T_t| = n)}{\P(|T_t| = n)} = \frac{\P(T_t \in \mathcal{R}_n)}{\P(T_t \in \mathcal{G}_n)} 
		\end{align*}
		\smallskip
		
		For any $n \equiv 1, \mod Q_{\psi}$ we have 
		\begin{align*}
			\P(T_t \in \mathcal{R}_n) = \sum_{a \in \mathcal{R}_n}\frac{\omega(a)t^{n-1}}{\psi(t)^n} = \frac{R_n t^{n-1}}{\psi(t)^{n}}
		\end{align*}
		finally Proposition \ref{propo: total_progeny} gives that 
		\begin{align*}
			\P(T_t \in \mathcal{G}_n) = \frac{A_n t^{n-1}}{\psi(t)^n}
		\end{align*}
		Hence, the result follows by taking the ratio of the two probabilities.
	\end{proof}
	
	If we condition upon extinction we get the following result. 
	\begin{theo} For any $t \in (0,R_{\psi})$ we have
		\begin{align*}
			\P(T_t \in \mathcal{R} | \text{ extinction }) = \frac{\P(T_t \in \mathcal{R} \cap \mathcal{G})}{q(t)} = \frac{1}{q(t)}\sum_{n = 1}^{\infty}\frac{R_n t^{n-1}}{\psi(t)^n}\,.
		\end{align*}
		Here $\mathcal{R} \subseteq \mathbb{T}$.
	\end{theo}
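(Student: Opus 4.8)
The plan is to treat the two equalities separately: the first is the definition of conditional probability once the extinction event is correctly identified, while the second is a countable decomposition of the finite part of $\mathcal{R}$ by total progeny together with Lemma~\ref{lemma: weight_prob}.

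First I would recall that, under the present encoding, extinction of the process $T_t$ means precisely that the realized tree is finite; that is, $\{\text{extinction}\} = \{T_t \in \mathcal{G}\}$, so that $\P(\text{extinction}) = \P(T_t \in \mathcal{G}) = q(t)$. With this identification the first equality is immediate from the definition of conditional probability,
$$\P(T_t \in \mathcal{R} \mid \text{extinction}) = \frac{\P(T_t \in \mathcal{R},\, T_t \in \mathcal{G})}{\P(T_t \in \mathcal{G})} = \frac{\P(T_t \in \mathcal{R} \cap \mathcal{G})}{q(t)}.$$
Note that here it is the intersection with $\mathcal{G}$ that discards any infinite trees belonging to $\mathcal{R}$ (recall $\mathcal{R} \subseteq \mathbb{T}$), so the conditioning automatically restricts attention to $\mathcal{R} \cap \mathcal{G}$.

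For the second equality I would decompose $\mathcal{R} \cap \mathcal{G}$ according to the number of nodes, writing $\mathcal{R} \cap \mathcal{G} = \bigcup_{n \geq 1} \mathcal{R}_n$ with $\mathcal{R}_n = \mathcal{R} \cap \mathcal{G}_n$, a disjoint union since each finite tree has a well-defined size. As $\mathcal{G}$ is countable, the finite part of $\mathcal{R}$ is a countable collection of atoms and countable additivity gives $\P(T_t \in \mathcal{R} \cap \mathcal{G}) = \sum_{n \geq 1} \P(T_t \in \mathcal{R}_n)$. Applying Lemma~\ref{lemma: weight_prob} term by term, for $a \in \mathcal{G}_n$ one has $\P(T_t = a) = \omega(a)\, t^{n-1}/\psi(t)^n$; summing over $a \in \mathcal{R}_n$ and using the definition $R_n = \sum_{a \in \mathcal{R}_n} \omega(a)$ yields $\P(T_t \in \mathcal{R}_n) = R_n\, t^{n-1}/\psi(t)^n$. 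Substituting into the sum and dividing by $q(t)$ produces the claimed formula.

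I do not expect any serious obstacle: the argument is a direct bookkeeping computation paralleling the proofs of Proposition~\ref{propo: total_progeny} and the preceding theorem. The only points deserving a word of care are that intersecting with $\mathcal{G}$ is what renders the infinite-tree part of $\mathcal{R}$ irrelevant, and that the interchange of the countable sum with the probability is justified purely by the non-negativity of the weights $\omega(a)$, so that no convergence issues arise beyond the countability of $\mathcal{G}$.
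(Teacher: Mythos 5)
Your proof is correct and is exactly the argument the paper intends: the paper states this theorem without an explicit proof, as an immediate consequence of identifying extinction with $\{T_t\in\mathcal{G}\}$ (so $q(t)=\P(T_t\in\mathcal{G})$, Theorem~\ref{thm:extinction_tree}) and of the computation $\P(T_t\in\mathcal{R}_n)=R_n t^{n-1}/\psi(t)^n$ already carried out, via Lemma~\ref{lemma: weight_prob}, in the proof of the preceding theorem. Your explicit handling of the two small points left implicit there --- that intersecting with $\mathcal{G}$ discards the infinite trees in $\mathcal{R}$, and that countable additivity over the partition $\mathcal{R}\cap\mathcal{G}=\bigcup_{n\ge1}\mathcal{R}_n$ needs no convergence argument beyond non-negativity --- fills in the details faithfully.
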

	This coincides with the measure defined earlier on the set of finite rooted plane trees \(\mathcal{G}\); see equation~\eqref{eq: Boltzmann/condition_extinction}.
	\smallskip
	
	Extensions of this approach to Galton–Watson forests (the parametric Galton-Watson forest) follow analogously by considering products of power series and using standard independence arguments. Similar coefficient‑based extinction formulas follow immediately. For brevity, we do not develop these results in here.

	\section{Conclusion and further directions}
	The ideas developed in this note extend naturally to subclasses of rooted plane trees defined by various types of restrictions (and also to random forests). Moreover, when combined with the results in~\cite{K_dos}, this perspective yields precise asymptotic formulas across a broad range of settings. A key strength of the approach is that it unifies and simplifies classical arguments, often reducing them to algebraic manipulations, while making the connection between combinatorial and probabilistic viewpoints explicit. The alternative proof of the Otter–Meir–Moon Theorem in~\cite{K_dos}, based on Khinchin families and including several extensions, serves as a clear illustration of this principle. The same methodology can be applied to derive asymptotic formulas for the probabilities associated to Galton-Watson processes studied here, and also to simulate their probabilities efficiently using power series coefficients.
	\smallskip
	
	Finally, we believe that the underlying structure revealed by these results is broadly applicable, suggesting that this methodology may be extended to other weighted combinatorial classes through the framework of Khinchin families.
	For further details about the Theory of Khinchin families see \cite{CFFM1,CFFM2, CFFM3, K_dos, MaciaThesis, MaciaGaussian}.

\end{document}